\def\Z{{\mathbb Z}}
\newcommand{\He}{\mathcal H}
\newcommand{\A}{\mathcal A}
\newcommand{\ug}{\stackrel{\rm def}{=}}
\newtheorem{teo}{Theorem}[section]
\newtheorem{lem}[teo]{Lemma}
\newtheorem{prop}[teo]{Proposition}
\newtheorem{cor}[teo]{Corollary}
\newtheorem{defi}[teo]{Definition}
\newproof{proof}{Proof}
\journal{}
\begin{document}

\begin{frontmatter}



\title{Kazhdan--Lusztig and $R$--polynomials of generalized Temperley--Lieb algebras\tnoteref{work}}
\tnotetext[work]{{This work is part of the author's doctoral dissertation, written under the direction of Prof. F. Brenti  at the University of Rome ``Tor Vergata''.}}




\author{Alfonso Pesiri\corref{dati}}
\ead{alfonso.pesiri@gmail.com}
\cortext[dati]{Dipartimento di Matematica, Universit\`a di Roma ``Tor Vergata'', Via della Ricerca Scientifica, I--00133, Roma, ITALY}

\begin{abstract}
We study two families of polynomials that play the same role, in the generalized Temperley--Lieb algebra of a Coxeter group, as the Kazhdan--Lusztig and $R$--polynomials in the Hecke algebra of the group. Our results include recursions, closed formulas, and other combinatorial properties for these polynomials. We focus mainly on non--branching Coxeter graphs.
\end{abstract}

\begin{keyword}

Temperley--Lieb algebras \sep Hecke algebras \sep Kazhdan--Lusztig basis \sep Coxeter groups
\end{keyword}

\end{frontmatter}


\section*{Introduction} \label{intro}

The Temperley--Lieb algebra $TL(X)$ is a quotient of the Hecke algebra $\He(X)$ associated to a Coxeter group $W(X)$, $X$ being an arbitrary  Coxeter graph. It first appeared in \cite{tl-pcp}, in the context of statistical mechanics (see, \textrm{e.g.}, \cite{jo-pik}). The case $X=A$ was studied by Jones (see \cite{jo-har}) in connection to knot theory. For an arbitrary Coxeter graph, the Temperley--Lieb algebra was studied by Graham. More precisely, in \cite{gr-phd} Graham showed that $TL(X)$ is finite dimensional whenever $X$ is of type $A, B, D, E, F, H$ and $I$. If $X \not = A$ then $TL(X)$ is usually referred to as the generalized Temperley--Lieb algebra.
The algebra $TL(X)$ has many properties similar to the Hecke algebra $\He(X)$. In particular, in \cite{gl-cbh} Green and Losonczy show that $TL(X)$ always admits an \emph{IC basis} (see \cite{du-icb} and \cite{gl-cbh} for definitions and further details). These bases have properties similar to the well--known Kazhdan--Lusztig basis of the Hecke algebra $\He(X)$. Algebraic properties of these bases have been studied in \cite{gl-ppk} and \cite{gl-fck}. In this work, which is a continuation of the paper \cite{pes-cptl}, we investigate some combinatorial properties of them. More precisely, we look at the coefficients of the \emph{IC basis} of $TL(X)$ with respect to the standard basis, and obtain some recursive formulas for them. To do this, we find necessary to first study some auxiliary polynomials (which have no analogue in $\He(X)$, and which in some sense express the relationship between $\He(X)$ and $TL(X)$) which were first defined in \cite{gl-cbh}. As a consequence of these results we also obtain closed formulas for the polynomials expressing the inverse of an element of the standard basis as a linear combination of elements of the standard basis (or equivalently, for the coordinates of the canonical involution with respect to the standard basis). Most of our results hold for non--branching Coxeter graphs, although some hold in full generality. Our results emphasize the close relationship between Kazhdan--Lusztig and $R$--polynomials and their analogues in $TL(X)$.

The organization of the paper is as follows.   
In the next section we recall some generalities on the Hecke algebra, Kazhdan--Lusztig polynomials and the Kazhdan--Lusztig basis of $\He(X)$. Moreover, we recall the Temperley--Lieb algebra and the families of the polynomials $\{a_{x,w}\}$ and $\{L_{x,w}\}$ that we study in this work. In Sections \ref{sec: a}, \ref{sec: l} we prove our results on polynomials $\{a_{x,w}\}$ and $\{L_{x,w}\}$, which hold for all finite irreducible and affine non--branching Coxeter graphs $X$ such that $X \neq \widetilde{F_4}$, and we obtain an explicit formula for the polynomials $\{a_{x,w}\}$ in type $A$.

\section{Preliminaries}\label{ha}

In this section we recall some basic facts about Hecke algebras $\He(X)$ and Temperley--Lieb algebras $TL(X)$, $X$ being any Coxeter graph. Let $W(X)$ be the Coxeter group having $X$ as Coxeter graph and $S(X)$ as set of generators. Let $\A$ be the ring of Laurent polynomials $\mathbb{Z}[q^{\frac{1}{2}},q^{-\frac{1}{2}}]$. The Hecke algebra $\He(X)$ associated to $W(X)$ is an $\A$--algebra with linear basis $\{T_w:\,w\in W(X)\}$ (see, \textrm{e.g.}, \cite[\S 6.1]{bb-ccg} and \cite[\S 7]{Hum}). For all $w \in W(X)$ and $s \in S(X)$ the multiplication law is determined by
\begin{equation}\label{prod}
T_{w}T_{s} = \left\{ \begin{array}{ll} 
T_{ws}             & \mbox{if $\ell(ws)>\ell(w)$,} \\
q T_{ws}+(q-1)T_w  & \mbox{if $\ell(ws)<\ell(w)$,}
\end{array} \right.
\end{equation}
where $\ell$ denotes the usual length function of $W(X)$. We refer to $\{T_w:\,w\in W(X)\}$ as the $T$--basis for $\He(X)$. 

Let $e$ be the identity element of $W(X)$. One easily checks that $T_s^2=(q-1)T_s+qT_e$, being $T_e$ the identity element, and so $T_s^{-1}=q^{-1}(T_s-(q-1)T_e)$. It follows that all the elements $T_w$ are invertible, since, if $w=s_1 \cdots s_r$ and $\ell(w)=r$, then $T_w=T_{s_1} \cdots T_{s_r}$.
To express $T_w^{-1}$ as a linear combination of elements in the basis, one obtains the so--called \emph{$R$--polynomials}. For a proof of the following result we refer to \cite[\S 7.4]{Hum}.    

\begin{teo}\label{r-pol}
There is a unique family of polynomials $\{ R_{x,w}(q) \}_{x,w \in W(X)} \subseteq \mathbb{Z}[q]$ such that 
$$T_{w^{-1}}^{-1}=\varepsilon_wq^{-\ell(w)}\sum_{x\leq w}\varepsilon_x R_{x,w}(q)T_x,$$ 
and $R_{x,w}(q)=0$ if $x \not \leq w$, where $\varepsilon_x \ug (-1)^{\ell(x)}$. Furthermore, $R_{x,w}(q)=1$ if $x=w$. 
\end{teo}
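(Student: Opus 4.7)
The plan is to prove existence and uniqueness simultaneously by induction on $\ell(w)$, extracting along the way a recursion for $R_{y,w}(q)$ in terms of polynomials at the strictly shorter element $v\ug ws$, and then to read off $R_{w,w}=1$ and the vanishing statement from that recursion. Uniqueness is free: $\{T_w\}_{w\in W(X)}$ is an $\A$--basis of $\He(X)$, so the coefficients in any expansion of $T_{w^{-1}}^{-1}$ are uniquely determined.

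For existence, set $R_{e,e}(q)\ug 1$ as the base case. For the inductive step, fix $w$ with $\ell(w)>0$, choose $s\in S(X)$ with $\ell(ws)<\ell(w)$, and let $v\ug ws$. Since $w^{-1}=sv^{-1}$ is a reduced factorization, $T_{w^{-1}}=T_sT_{v^{-1}}$, hence $T_{w^{-1}}^{-1}=T_{v^{-1}}^{-1}T_s^{-1}$. Combining $T_s^{-1}=q^{-1}(T_s-(q-1)T_e)$ with the multiplication rule \eqref{prod} yields
$$T_xT_s^{-1}=\begin{cases}T_{xs}& \text{if }\ell(xs)<\ell(x),\\ q^{-1}T_{xs}-q^{-1}(q-1)T_x& \text{if }\ell(xs)>\ell(x).\end{cases}$$
Inserting the inductive formula for $T_{v^{-1}}^{-1}$ into $T_{v^{-1}}^{-1}T_s^{-1}$, collecting the coefficient of each $T_y$, and matching it against $\varepsilon_wq^{-\ell(w)}\sum_y\varepsilon_yR_{y,w}(q)T_y$ (using $\varepsilon_wq^{-\ell(w)}=-q^{-1}\varepsilon_vq^{-\ell(v)}$) forces the recursion
$$R_{y,w}(q)=\begin{cases}R_{ys,v}(q)& \text{if }\ell(ys)<\ell(y),\\ qR_{ys,v}(q)+(q-1)R_{y,v}(q)& \text{if }\ell(ys)>\ell(y),\end{cases}$$
which by induction produces polynomials in $\Z[q]$.

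The normalization $R_{w,w}(q)=1$ follows from the first branch of the recursion applied with $y=w$ (where $ys=ws=v$), giving $R_{w,w}=R_{v,v}=1$ inductively. The vanishing $R_{x,w}(q)=0$ for $x\not\leq w$ follows from the same recursion combined with the Lifting Property of Bruhat order, which for $v=ws<w$ says that $x\leq w$ iff either ($\ell(xs)<\ell(x)$ and $xs\leq v$) or ($\ell(xs)>\ell(x)$ and $x\leq v$). Assuming $x\not\leq w$, in the first branch $xs\not\leq v$ gives $R_{x,w}=R_{xs,v}=0$ by induction; in the second branch $x\not\leq v$ together with $x\leq xs$ (since $\ell(xs)>\ell(x)$) forces $xs\not\leq v$ as well, so both $R_{x,v}$ and $R_{xs,v}$ vanish inductively and the recursion gives $R_{x,w}=0$.

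The main obstacle is the bookkeeping in the inductive computation: one must cleanly split the contributions to each $T_y$ according to both sign cases ($\ell(xs)$ versus $\ell(x)$ on the input side, $\ell(ys)$ versus $\ell(y)$ on the output side) and verify that the half--integer powers of $q$ coming from the prefactor $\varepsilon_wq^{-\ell(w)}$ combine with those from $T_s^{-1}$ so that the resulting coefficients lie in $\Z[q]\subset\A$. Once that verification is done, the remaining assertions drop out as straightforward consequences of the recursion and a single standard fact about Bruhat order.
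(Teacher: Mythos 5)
Your proof is correct and follows essentially the same route as the argument the paper points to (it cites Humphreys, \S 7.4, rather than proving the theorem itself): induction on $\ell(w)$ via $T_{w^{-1}}^{-1}=T_{v^{-1}}^{-1}T_s^{-1}$ with $v=ws$, yielding the standard recursion $R_{y,w}=R_{ys,v}$ or $qR_{ys,v}+(q-1)R_{y,v}$, with uniqueness from the $T$--basis and vanishing from the lifting property. One trivial remark: there are no half--integer powers of $q$ anywhere in this computation (the prefactor is $q^{-\ell(w)}$), so the ``obstacle'' you flag at the end is not actually present.
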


Define a map $\iota:\He \rightarrow \He$ such that $\iota(T_w)=(T_{w^{-1}})^{-1}$, $\iota(q)=q^{-1}$ and extend by linear extension. We refer the reader to \cite[\S 7.7]{Hum} for the proof of the following result. 

\begin{prop}
The map $\iota$ is a ring homomorphism of order $2$ on $\He(X)$.
\end{prop}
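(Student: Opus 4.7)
My plan is to verify the two claims—ring homomorphism and order two—separately, with the bulk of the work devoted to multiplicativity. Additivity is built into the definition, so the substantive task is to check $\iota(xy)=\iota(x)\iota(y)$. Since $\iota$ extends linearly from the $T$-basis and acts on the scalar ring $\A$ by the ring automorphism $q^{1/2}\mapsto q^{-1/2}$, I can reduce to the case $x=T_v$, $y=T_{w'}$ for $v,w'\in W(X)$. Writing $w'=s_1\cdots s_r$ reduced and expanding $T_{w'}=T_{s_1}\cdots T_{s_r}$, a straightforward induction on $\ell(w')$ further reduces the problem to proving the single identity $\iota(T_vT_s)=\iota(T_v)\iota(T_s)$ for all $v\in W(X)$ and $s\in S(X)$.

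I would verify this identity by splitting into the two cases of (\ref{prod}). If $\ell(vs)>\ell(v)$, then $T_vT_s=T_{vs}$, so the left side is $T_{(vs)^{-1}}^{-1}=T_{sv^{-1}}^{-1}$. The right side equals $(T_sT_{v^{-1}})^{-1}$, and since $\ell(sv^{-1})>\ell(v^{-1})$ the product inside is again $T_{sv^{-1}}$, giving the same answer. If instead $\ell(vs)<\ell(v)$, expanding the left side uses the lower branch of (\ref{prod}), while expanding the right side uses the explicit formula $T_s^{-1}=q^{-1}(T_s-(q-1))$. Equality then reduces to the identity $T_{v^{-1}}^{-1}T_s=T_{(vs)^{-1}}^{-1}$, which I would extract by inverting the left-multiplication analogue $T_s^{-1}T_{v^{-1}}=T_{sv^{-1}}$ (itself obtained from the left-sided version of (\ref{prod})—valid because $\ell(sv^{-1})<\ell(v^{-1})$—combined with $T_s^{-1}=q^{-1}(T_s-(q-1))$); the residual $(q-1)$-terms cancel because $-q^{-1}(q-1)=q^{-1}-1$. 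This descent case is the main obstacle, since one must keep careful track of the interplay between the scalar corrections coming from (\ref{prod}) and those coming from the formula for $T_s^{-1}$.

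The order-two statement then follows quickly. On scalars, $\iota(\iota(q))=\iota(q^{-1})=q$ by construction. On basis elements, $\iota(\iota(T_w))=\iota(T_{w^{-1}}^{-1})$, which by the multiplicativity just established equals $\iota(T_{w^{-1}})^{-1}=(T_w^{-1})^{-1}=T_w$, so $\iota^2$ agrees with the identity on a generating set and hence everywhere.
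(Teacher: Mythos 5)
Your argument is correct and is essentially the standard proof that the paper defers to (it cites \cite[\S 7.7]{Hum} rather than proving the statement itself): reduce by semilinearity and induction on length to the identity $\iota(T_vT_s)=\iota(T_v)\iota(T_s)$, split according to whether $s$ is an ascent or descent of $v$, and deduce order two from multiplicativity together with $\iota(T_{w^{-1}}^{-1})=\iota(T_{w^{-1}})^{-1}$. The only point worth making explicit is the left-handed analogue of (\ref{prod}), i.e.\ $T_sT_w=T_{sw}$ when $\ell(sw)>\ell(w)$ and $T_sT_w=qT_{sw}+(q-1)T_w$ otherwise, which you invoke and which follows routinely from (\ref{prod}) and $T_s^2=(q-1)T_s+qT_e$.
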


In \cite{kl-rcg}, Kazhdan and Lusztig prove this basic theorem:

\begin{teo} \label{kl-pol}
There exists a unique basis $\{C_w:\, w \in W(X)\}$ for $\He(X)$ such that the following properties hold:
\begin{itemize}
\item[{\rm (i)}] $\iota(C_w)=C_w$,
\item[{\rm (ii)}] $C_w=\varepsilon_w q^{\frac{\ell(w)}{2}}\sum_{x\leq w}\varepsilon_x q^{-\ell(x)}P_{x,w}(q^{-1})T_x$,
\end{itemize}
where $\{P_{x,w}(q)\}\subseteq \mathbb{Z}[q]$, $P_{w,w}(q)=1$ and ${\rm deg}(P_{x,w}(q)) \leq \frac{1}{2}(\ell(w)-\ell(x)-1)$ if $x<w$. 
\end{teo}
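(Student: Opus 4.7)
The plan is to establish uniqueness and existence separately, both by induction on $\ell(w)$.

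For uniqueness, I would translate property (i) into an explicit identity relating $P_{x,w}(q)$ and $P_{x,w}(q^{-1})$ via the $R$--polynomials. Applying $\iota$ to the right-hand side of (ii), expanding each $\iota(T_x)=T_{x^{-1}}^{-1}$ by Theorem \ref{r-pol}, and then matching coefficients of $T_x$ with those in (ii), one obtains an identity of the shape
\[
q^{\ell(w)-\ell(x)}\,P_{x,w}(q^{-1}) - P_{x,w}(q) = \sum_{x<y\le w} \varepsilon_x\varepsilon_y\,R_{x,y}(q)\,q^{\ell(y)-\ell(x)}P_{y,w}(q^{-1}).
\]
Proceeding by descending induction on $\ell(x)$ with base case $P_{w,w}=1$, the right-hand side is already known, and the degree constraint $\deg P_{x,w}\le (\ell(w)-\ell(x)-1)/2$ allows a unique decomposition of the right-hand side into a polynomial in $q$ and a $q^{\ell(w)-\ell(x)}$-shifted polynomial in $q^{-1}$ with no overlapping exponents, pinning down $P_{x,w}$ uniquely.

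For existence, I would take $C_e=T_e$ as the base case and, given $w$ with $\ell(w)\ge 1$, pick $s\in S(X)$ with $sw<w$. By induction $C_{sw}$ exists, and the product $C_s\cdot C_{sw}$ is $\iota$-invariant since $\iota$ is a ring homomorphism of order $2$ and both factors are $\iota$-invariant (with $C_s$ computed directly). Expanding via (\ref{prod}) produces an element whose coefficients in the $T$-basis look almost like (ii) but may slightly exceed the degree bound in top degree. I would then set
\[
C_w \;:=\; C_s\,C_{sw} \;-\; \sum_{\substack{z<w\\ sz<z}} \mu(z,sw)\,C_z,
\]
where $\mu(z,sw)$ denotes the coefficient of $q^{(\ell(sw)-\ell(z)-1)/2}$ in $P_{z,sw}(q)$. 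The resulting element is still $\iota$-invariant, has the correct leading coefficient of $T_w$, and by induction expands as a linear combination of $T_x$'s for $x\le w$.

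The main obstacle is verifying the degree bound on the resulting $P_{x,w}$. The coefficient of $T_x$ in $C_s C_{sw}$ carries a contribution whose degree (in the normalization of (ii)) exceeds $(\ell(w)-\ell(x)-1)/2$ by one, and the subtraction of the $\mu$-multiples of the $C_z$ is engineered precisely to cancel this top-degree contribution. A case analysis depending on whether $sx<x$ or $sx>x$, using the two branches of (\ref{prod}), is needed to confirm that the final $P_{x,w}(q)$ lie in $\mathbb{Z}[q]$ with the required degree bound. This is the technical heart of the original Kazhdan--Lusztig argument in \cite{kl-rcg}.
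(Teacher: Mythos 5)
The paper gives no proof of this theorem --- it is quoted directly from Kazhdan--Lusztig \cite{kl-rcg} --- and your sketch is precisely the standard argument from that source (see also \cite[\S 7.9--7.11]{Hum}): uniqueness via the functional equation coming from $\iota$-invariance plus the degree bound, existence via $C_sC_{sw}$ corrected by the $\mu(z,sw)$ terms. One correction: after applying $\iota$ to (ii), expanding $\iota(T_y)=T_{y^{-1}}^{-1}$ by Theorem \ref{r-pol} and matching coefficients of $T_x$, the identity one actually obtains is $q^{\ell(w)-\ell(x)}P_{x,w}(q^{-1})-P_{x,w}(q)=\sum_{x<y\le w}R_{x,y}(q)\,P_{y,w}(q)$ (the summands involve $P_{y,w}(q)$, not $\varepsilon_x\varepsilon_y q^{\ell(y)-\ell(x)}P_{y,w}(q^{-1})$); the structural features you rely on --- the right-hand side being known by descending induction on $\ell(x)$, and the left-hand side splitting uniquely because the exponents of $P_{x,w}(q)$ and of $q^{\ell(w)-\ell(x)}P_{x,w}(q^{-1})$ do not overlap under the stated degree bound --- hold for this corrected identity, so the rest of your argument goes through.
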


The polynomials $\{P_{x,w}(q)\}_{x,w \in W(X)}$ are the so--called \emph{Kazhdan--Lusztig polynomials} of $W(X)$.
In \cite[\S 7.9]{Hum} it is shown that one can substitute the basis $\{C_w:\, w \in W(X)\}$ with the equivalent basis $\{C_w':\,w \in W(X)\}$, where 
\begin{equation}\label{C'w}
C_w'=q^{-\frac{\ell(w)}{2}}\sum_{x \leq w}P_{x,w}(q)T_x.
\end{equation}
For the rest of this paper we will refer to the latter basis as the \emph{Kazhdan--Lusztig basis} for $\He(X)$. 

Let $s_i, s_j \in S(X)$ and denote by $\langle s_i,s_j \rangle$ the parabolic subgroup of $W(X)$ generated by $s_i$ and $s_j$. Following \cite{gr-phd}, we consider the two--sided ideal $J(X)$ generated by all elements of $\He(X)$ of the form $$\sum_{w \in \langle s_i,s_j \rangle}T_w,$$ where $(s_i,s_j)$ runs over all pairs of non--commuting generators in $S(X)$ such that the order of $s_is_j$ is finite.   

\begin{defi}
The generalized Temperley--Lieb algebra is $TL(X)\stackrel{\rm def}{=}\He(X)/J(X)$.
\end{defi}

When $X$ is of type $A$, we refer to $TL(X)$ as the Temperley--Lieb algebra. In order to describe a basis for $TL(X)$, we recall the notion of a  \emph{fully commutative element} for $W(X)$ (see \cite{ste-fce}).

\begin{defi}
An element $w \in W(X)$ is fully commutative if any reduced expression for $w$ can be obtained from any other by applying Coxeter relations that involve only commuting generators. We let $$W_c(X) \stackrel{\rm def}{=} \{w \in W(X):\, w \mbox{ is a fully commutative element}\}.$$  
\end{defi}

If  $X=A_{n-1}$ then $W(X)=S_n$ (see \cite[Example 1.2.3]{bb-ccg}) and $W_c(A_{n-1})$ may be described as the set of elements of $W(A_{n-1})$ whose reduced expressions avoid substrings of the form $s_is_{i \pm 1}s_i$, for all $s_i \in S$ (see \cite[Proposition 1.1]{ste-fce}). Another description of $W_c(A_{n-1})$ may be given in terms of pattern avoidance: namely, in \cite[Theorem 2.1]{bjs-cps} Billey, Jockusch and Stanley show that $W_c(A_{n-1})$ coincides with the set of permutations avoiding the pattern $321$. Moreover $\vert W_c(A_{n-1}) \vert = C_n$, where $C_n=\frac{1}{n+1}\binom{2n}{n}$ denotes the $n$--th Catalan number (see  \cite[Proposition 3]{fan-haq} for further details). A similar characterization can be given in type $B$. If $X=B_n$ then $W_c(X)$ can be described as the group of signed permutations $S_n^B$ (see \cite[Example 1.2.4]{bb-ccg}). In \cite[Theorem 5.1]{ste-sca} Stembridge showed that the set of the signed permutations avoiding the patterns in $\{ \overline{12},\, 321,\, \overline{3}21,\, \overline{23}1,\, 2\overline{3}1 \}$ and $W_c(B_n)$ coincide. Moreover $\vert W_c(B_n) \vert = (n+2)C_n-1$ (see \cite[Proposition 5.9]{ste-sca}).

Let $t_w=\sigma(T_w)$, where $\sigma: \He \rightarrow \He /J$ is the canonical projection. A proof of the following can be found in \cite{gr-phd}.

\begin{teo}
$TL(X)$ admits an $\A$--basis of the form $\{t_w:\, w \in W_c(X)\}$.
\end{teo}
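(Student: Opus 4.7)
The plan is to prove the theorem in two steps: first show that the set $\{t_w:\, w \in W_c(X)\}$ spans $TL(X)$ as an $\A$-module, and then establish its linear independence.

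For the spanning statement, I would argue by induction on the length $\ell(w)$ that, for every $w\in W(X)$, the element $t_w$ lies in the $\A$-span of $\{t_v:\,v\in W_c(X)\}$. The base case $\ell(w)=0$ is trivial. If $w\in W_c(X)$ there is nothing to prove. Otherwise, by the very definition of fully commutative elements together with Matsumoto's theorem, every reduced expression of $w$ can be brought, via commutation of commuting generators, to one that contains a consecutive substring of the form $\underbrace{s_is_js_i\cdots}_{m_{ij}\text{ factors}}$ with $m_{ij}\ug m(s_i,s_j)\geq 3$. Writing such a reduced expression as $w=u\cdot z_{ij}\cdot v$ where $z_{ij}$ denotes the longest element of the parabolic subgroup $\langle s_i,s_j\rangle$, and using the fact that the lengths add, we obtain $T_w = T_u T_{z_{ij}} T_v$ in $\He(X)$. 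But the generator
\[
\sum_{y\in\langle s_i,s_j\rangle} T_y \;\in\; J(X)
\]
gives $T_{z_{ij}} \equiv -\sum_{y<z_{ij}} T_y \pmod{J(X)}$, where the sum is over the remaining elements of $\langle s_i,s_j\rangle$. Substituting and repeatedly applying \eqref{prod} to expand each $T_u T_y T_v$ as an $\A$-linear combination of $T_{w'}$ with $\ell(w')<\ell(w)$, we conclude that $t_w$ is an $\A$-linear combination of such $t_{w'}$, and induction closes this step.

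For linear independence one must show that no nontrivial $\A$-linear combination $\sum_{w\in W_c(X)} a_w T_w$ lies in $J(X)$. This is the heart of the argument. The approach in \cite{gr-phd} is to exhibit a concrete faithful $TL(X)$-module, built as a free $\A$-module with basis indexed by a set of combinatorial objects in bijection with $W_c(X)$ (Temperley--Lieb diagrams in type $A$, and decorated diagram analogues in the other non--branching types). On such a module one verifies that the operators $\{t_w:\,w\in W_c(X)\}$ send a fixed highest--weight--like basis element to pairwise distinct basis vectors, up to units of $\A$; this forces the $t_w$'s to be linearly independent.

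The main obstacle lies in this second step: the spanning part is essentially a straightening procedure that follows directly from the definition of $J(X)$, while independence requires either the explicit construction of a faithful diagrammatic representation or, equivalently, a careful analysis of the relations in $J(X)$ to rule out hidden dependencies among the $\{T_w:\,w\in W_c(X)\}$ modulo the ideal. In the cases of finite $W(X)$ one has the further consistency check that $|W_c(X)|$ coincides with the dimension of $TL(X)$ computed by other means (for instance $C_n$ in type $A_{n-1}$ and $(n+2)C_n-1$ in type $B_n$, as recalled above), but this by itself does not replace the need for a faithfulness argument to establish independence in full generality.
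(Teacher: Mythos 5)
The paper does not actually prove this statement; it is quoted verbatim from Graham's thesis \cite{gr-phd}, so there is no in-paper argument to compare against. Measured on its own terms, your spanning argument is correct and essentially complete: Stembridge's criterion (\cite[Proposition 2.1]{ste-fce}, recalled in weaker form in the paper) guarantees that a non-fully-commutative $w$ has a reduced word containing an alternating substring $s_is_js_i\cdots$ of length $m(s_i,s_j)\geq 3$, the lengths add in the factorization $w=u\,z_{ij}\,v$, and reducing $T_{z_{ij}}$ modulo the generator $\sum_{y\in\langle s_i,s_j\rangle}T_y$ of $J(X)$ strictly lowers length, so induction closes. (Minor caveat: this uses Stembridge's reduced-word characterization, not literally the commutation-class definition the paper gives; you should cite it rather than call it ``the very definition.'')

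The genuine gap is in the linear independence step, and it is twofold. First, you do not actually construct the faithful module; you only assert that one exists, which is precisely the content of Graham's theorem, so as written the argument is circular at its crucial point. Second, the mechanism you propose would not suffice even if the module were constructed: in the standard diagrammatic modules (e.g.\ the link/half-diagram module in type $A$) it is \emph{false} that the operators $t_w$ send a fixed basis vector to pairwise distinct basis vectors up to units --- for instance $t_e$ and $t_{s_1}$ act on a half-diagram containing a cup at $(1,2)$ by scalars, giving proportional images --- so distinctness of images of a single vector cannot be the engine of the proof. The arguments that actually work are either (i) in finite type, exhibit a surjection of $TL(X)$ onto a free $\A$-module of rank $|W_c(X)|$ (the diagram algebra) and combine this with the spanning statement, using that a generating set of a finitely generated free module over a commutative ring whose cardinality equals the rank is automatically a basis; or (ii) in general, Graham's cellular-algebra construction. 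Either way, the independence half needs a real argument, not a pointer to one.
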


We call $\{t_w:\, w \in W_c(X)\}$ the \emph{$t$--basis} of $TL(X)$. By (\ref{prod}), it satisfies 
\begin{equation} \label{t-basmolt}
t_{w}t_{s} = \left\{ \begin{array}{ll} 
t_{ws}             & \mbox{if $\ell(ws)>\ell(w)$,} \\
q t_{ws}+(q-1)t_w  & \mbox{if $\ell(ws)<\ell(w)$.}
\end{array} \right.   
\end{equation}

Observe that if $ws \not \in W_c(X)$, then $t_{ws}$ can be expressed as linear combination of the $t$--basis elements by means of the following result (see \cite[Lemma 1.5]{gl-cbh}).

\begin{prop}\label{d-pol}
Let $w \in W(X)$. Then there exists a unique family of polynomials $\{D_{x,w}(q)\}_{x \in W_c(X)} \subseteq \mathbb{Z}[q]$ such that 
$$ t_w=\sum_{\substack{x \in W_c(X) \\ x \leq w}} D_{x,w}(q)t_x, $$
where $D_{w,w}(q)=1$ if $w \in W_c(X)$. Furthermore, $D_{x,w}(q)=0$ if $x \not \leq w$.
\end{prop}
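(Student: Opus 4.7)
The plan is to prove uniqueness first and then to produce the polynomials $D_{x,w}(q)$ with the three asserted properties by induction on $\ell(w)$. Uniqueness is immediate: since $\{t_x : x \in W_c(X)\}$ is an $\A$--basis of $TL(X)$ by the preceding theorem, any expansion of $t_w$ in this basis is unique, whether the coefficients are allowed to range over $\A$ or restricted to the subring $\mathbb{Z}[q]$. The base of the induction is $\ell(w) = 0$, for which $w = e \in W_c(X)$ and one takes $D_{e,e}(q) = 1$; the easy subcase of the inductive step is $w \in W_c(X)$, where $t_w$ is itself a basis element and one sets $D_{w,w}(q) = 1$ with all other coefficients zero.

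The substantive subcase is $w \notin W_c(X)$. By the standard characterization of fully commutative elements, some reduced expression of $w$ contains a braid substring $s_i s_j s_i \cdots$ of length $m(s_i, s_j) \geq 3$, so $w$ factors reducedly as $w = a \cdot w_0(i,j) \cdot c$, where $w_0(i,j)$ denotes the longest element of the parabolic subgroup $\langle s_i, s_j \rangle$. Hence $T_w = T_a T_{w_0(i,j)} T_c$ in $\He(X)$, and I will exploit the defining relation of $J(X)$: the identity $\sum_{u \in \langle s_i, s_j \rangle} T_u \in J(X)$ descends to $t_{w_0(i,j)} = -\sum_{u \neq w_0(i,j)} t_u$ in $TL(X)$, yielding
$$t_w = -\sum_{\substack{u \in \langle s_i, s_j \rangle \\ u \neq w_0(i,j)}} t_a\, t_u\, t_c.$$

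For each such $u$, I will expand $T_a T_u T_c$ in the $T$--basis of $\He(X)$ by iterated application of (\ref{prod}); this produces a $\mathbb{Z}[q]$--linear combination $\sum_x c_x T_x$ in which every $x$ satisfies $\ell(x) \leq \ell(a) + \ell(u) + \ell(c) < \ell(w)$ (since $\ell(u) < m(s_i, s_j)$) and $x \leq w$ in Bruhat order, since by the standard subword property the only $x$'s that arise are those admitting a reduced expression as a subword of the fixed reduced expression of $w = a \cdot w_0(i,j) \cdot c$. Projecting to $TL(X)$ and applying the inductive hypothesis to each $t_x$ (whose length is strictly smaller) then yields $t_w$ as a $\mathbb{Z}[q]$--linear combination of basis elements $t_y$ with $y \in W_c(X)$ and $y \leq w$, as required.

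The main obstacle is the bookkeeping: verifying that coefficients remain in $\mathbb{Z}[q]$ (rather than in the larger ring $\A$) after iterated uses of (\ref{prod}), and that the Bruhat--order support is preserved through every multiplication step. These are standard facts about Hecke algebras and Coxeter groups, but they must be threaded carefully through the induction so that all three conclusions of the proposition emerge simultaneously.
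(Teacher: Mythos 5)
Your argument is correct and complete: uniqueness follows from the basis property of $\{t_x : x\in W_c(X)\}$, and existence follows by induction on $\ell(w)$, using Stembridge's criterion to produce a reduced factorization $w = a\,w_0(i,j)\,c$ through the longest element of a dihedral parabolic, the defining relation of $J(X)$ to rewrite $t_{w_0(i,j)}$, the subword property to keep the Bruhat support below $w$, and relation (\ref{prod}) to keep all coefficients in $\mathbb{Z}[q]$. The paper does not prove this proposition but cites \cite[Lemma 1.5]{gl-cbh}, and your induction is precisely the standard argument behind that result, so there is nothing to correct.
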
 

From the fact that the involution $\iota$ fixes the ideal $J(X)$ (see \cite[Lemma 1.4]{gl-cbh}), it follows that $\iota$ induces an involution on $TL(X)$, which we still denote by $\iota$, if there is no danger of confusion. More precisely, we have the following result.

\begin{prop}
The map $\iota$ is a ring homomorphism of order $2$ such that $\iota(t_w)=(t_{w^{-1}})^{-1}$ and $\iota(q)=q^{-1}$.
\end{prop}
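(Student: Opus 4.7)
The plan is to observe that this statement is essentially a formal consequence of the three facts already collected in the preceding excerpt: that $\iota:\He(X)\to\He(X)$ is a ring homomorphism of order $2$, that $\iota(J(X))\subseteq J(X)$ (cited from \cite[Lemma 1.4]{gl-cbh}), and that $\sigma:\He(X)\to\He(X)/J(X)=TL(X)$ is the canonical ring projection with $t_w=\sigma(T_w)$. So the whole argument is a standard universal-property-of-the-quotient computation; there is no real obstacle.

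First I would invoke the universal property of the quotient. Since $\iota(J(X))\subseteq J(X)$, the composition $\sigma\circ\iota:\He(X)\to TL(X)$ kills $J(X)$, and hence factors uniquely through $\sigma$ as a map $\bar\iota:TL(X)\to TL(X)$, characterized by $\bar\iota\circ\sigma=\sigma\circ\iota$. Because $\iota$ is a ring homomorphism and $\sigma$ is a surjective ring homomorphism, $\bar\iota$ is automatically a ring homomorphism; and since $\iota^2=\mathrm{id}$ on $\He(X)$, we get $\bar\iota^2\circ\sigma=\sigma\circ\iota^2=\sigma$, so $\bar\iota^2=\mathrm{id}$ on $TL(X)$. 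This is the map denoted again by $\iota$ in the statement.

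Next I would verify the two explicit formulas. For the scalar action, $\iota(q)=q^{-1}$ already holds inside $\He(X)$, and $\sigma$ is $\A$-linear, so the same identity holds in $TL(X)$. For $\iota(t_w)$, compute
\[
\iota(t_w)=\iota(\sigma(T_w))=\sigma(\iota(T_w))=\sigma\bigl((T_{w^{-1}})^{-1}\bigr),
\]
using the defining property of $\iota$ on $\He(X)$. Since $T_{w^{-1}}$ is invertible in $\He(X)$ (as recalled after (\ref{prod})) and $\sigma$ is a unital ring homomorphism, $\sigma(T_{w^{-1}})=t_{w^{-1}}$ is invertible in $TL(X)$ with $\sigma((T_{w^{-1}})^{-1})=(t_{w^{-1}})^{-1}$. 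Combining these gives $\iota(t_w)=(t_{w^{-1}})^{-1}$, as required.

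Finally, one should remark that the formula $\iota(t_w)=(t_{w^{-1}})^{-1}$ is actually enough to pin down $\iota$: the elements $\{t_w:w\in W(X)\}$ span $TL(X)$ over $\A$ (via Proposition \ref{d-pol}, rewriting non-fully-commutative $t_w$ in the $t$-basis), and we have just shown how $\iota$ acts on each of them together with its action on $\A$. Thus the induced map is the unique ring homomorphism with the stated properties, completing the proof. The only step that has any content is the invocation of $\iota(J(X))\subseteq J(X)$, which we assume from \cite[Lemma 1.4]{gl-cbh}, so no further calculation is needed.
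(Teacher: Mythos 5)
Your argument is correct and is exactly the route the paper takes: the paper gives no separate proof, merely noting that since $\iota$ fixes the ideal $J(X)$ (citing \cite[Lemma 1.4]{gl-cbh}) it descends to an involution of $TL(X)$, and your universal-property computation together with the observation that ring homomorphisms preserve inverses (so $\sigma((T_{w^{-1}})^{-1})=(t_{w^{-1}})^{-1}$) is precisely the fleshed-out version of that remark.
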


To express the image of $t_w$ under $\iota$ as a linear combination of elements of the $t$--basis, one defines a new family of polynomials (see \cite[\S 2]{gl-cbh}).

\begin{prop}\label{a-pol}
Let $w \in W_c(X)$. Then there exists a unique family of polynomials $\{a_{y,w}(q)\} \subseteq \mathbb{Z}[q]$ such that 
\[ (t_{w^{-1}})^{-1}=q^{-\ell(w)}\sum_{\substack{y \in W_c(X) \\ y\leq w}}a_{y,w}(q)t_y, \]
where $a_{w,w}(q)=1$.
\end{prop}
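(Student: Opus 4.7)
The plan is to descend the analogous statement for $R$--polynomials from $\He(X)$ to $TL(X)$ via the projection $\sigma$, and then reduce the non--fully--commutative terms to the $t$--basis using the polynomials $\{D_{x,w}\}$ from Proposition \ref{d-pol}.

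First, I would start from Theorem \ref{r-pol}, which gives
\[
T_{w^{-1}}^{-1}=\varepsilon_w q^{-\ell(w)}\sum_{x\leq w}\varepsilon_x R_{x,w}(q)\,T_x.
\]
Applying the projection $\sigma:\He(X)\to TL(X)$ and using the fact that $\iota$ preserves the ideal $J(X)$ (so $\sigma(T_{w^{-1}}^{-1})=\sigma(\iota(T_w))=\iota(t_w)=(t_{w^{-1}})^{-1}$), we obtain
\[
(t_{w^{-1}})^{-1}=\varepsilon_w q^{-\ell(w)}\sum_{x\leq w}\varepsilon_x R_{x,w}(q)\,t_x.
\]
Next, I would apply Proposition \ref{d-pol} to each $t_x$ occurring on the right--hand side (this is valid uniformly whether $x \in W_c(X)$ or not, since $D_{x,x}(q)=1$ when $x\in W_c(X)$) in order to rewrite everything in terms of the $t$--basis.

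Collecting coefficients, the expansion takes the form
\[
(t_{w^{-1}})^{-1}=q^{-\ell(w)}\sum_{\substack{y \in W_c(X)\\ y\leq w}} a_{y,w}(q)\,t_y,
\]
where the natural candidate is
\[
a_{y,w}(q)\ug \varepsilon_w\sum_{y\leq x\leq w}\varepsilon_x R_{x,w}(q)\,D_{y,x}(q).
\]
Since both $R_{x,w}(q)$ and $D_{y,x}(q)$ lie in $\mathbb{Z}[q]$, and the signs $\varepsilon_x,\varepsilon_w$ are $\pm 1$, it follows immediately that $a_{y,w}(q)\in\mathbb{Z}[q]$. For the normalization, when $y=w$, the constraint $y\leq x\leq w$ forces $x=w$, so the sum collapses to $\varepsilon_w\varepsilon_w R_{w,w}(q)D_{w,w}(q)=1$, as desired.

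Finally, uniqueness is immediate: $\{t_y:\,y\in W_c(X)\}$ is an $\A$--basis of $TL(X)$, so the expansion of $(t_{w^{-1}})^{-1}$ has uniquely determined coefficients in $\A$, and in particular the polynomials $a_{y,w}(q)$ are uniquely determined. The only technical care I anticipate is bookkeeping: making sure the double sum over $x\leq w$ and $y\leq x$ can legitimately be re--indexed as a single sum over $y\in W_c(X)$ with $y\leq w$, using the vanishing of $D_{y,x}(q)$ for $y\not\leq x$; no real obstacle is expected, since both the $R$-- and $D$--polynomials have the right Bruhat--order support built in.
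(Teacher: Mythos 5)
Your argument is correct. Note that the paper itself gives no proof of Proposition \ref{a-pol}; it is imported from \cite[\S 2]{gl-cbh}, and the paper's own machinery for computing these polynomials (Proposition \ref{proarec}) proceeds instead by induction on $\ell(w)$ via $(t_{w^{-1}})^{-1}=(t_{(ws)^{-1}})^{-1}(t_s)^{-1}$. Your projection argument is a genuinely different route, and it in fact proves more: applying $\sigma$ to Theorem \ref{r-pol} and expanding each $t_x$ through Proposition \ref{d-pol} yields not just existence, integrality, uniqueness and the normalization $a_{w,w}=1$, but the closed formula
\[
a_{y,w}(q)=\varepsilon_w\sum_{y\leq x\leq w}\varepsilon_x R_{x,w}(q)\,D_{y,x}(q),
\]
which, after observing that $D_{y,x}=\delta_{y,x}$ whenever $x\in W_c(X)$ (immediate from the uniqueness in Proposition \ref{d-pol}, since $t_x=t_x$ is already a $t$--basis expansion), is precisely the identity the paper later quotes as Proposition \ref{c-apol} from \cite[Proposition 4.1]{pes-cptl} --- and your derivation shows it holds for an arbitrary Coxeter graph, not only those satisfying (\ref{Cw-0}). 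The only step worth making fully explicit is the identification $\sigma(T_{w^{-1}}^{-1})=(t_{w^{-1}})^{-1}$, which follows either from $\sigma$ being a unital ring homomorphism (images of inverses are inverses of images) or, as you indicate, from $\iota(J)=J$ so that $\sigma\circ\iota=\iota\circ\sigma$. The bookkeeping you flag (reindexing the double sum, support of $R$ and $D$ on Bruhat intervals) is indeed harmless, as all sums are finite. In short: a correct proof, by the route that the paper reserves for Proposition \ref{c-apol} rather than the one its cited source uses for the existence statement itself.
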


The polynomials $\{a_{x,w}(q)\}$ associated to $TL(X)$ play the same role as the polynomials $\{R_{x,w}(q)\}$ associated to $\He(X)$. They both represent the coordinates of elements of the form $\iota(t_{w})$ (respectively, $\iota(T_{w})$) with respect to the $t$--basis (respectively, $T$--basis).

The generalized Temperley--Lieb algebra admits a basis $\{c_w:\, w \in W_c(X)\}$ which is analogous to the Kazhdan--Lusztig basis $\{C_w':\, w \in W(X)\}$ of $\He(X)$. The following is a restatement of \cite[Theorem 3.6]{gl-cbh}.

\begin{teo}\label{l-pol}
There exists a unique basis $\{c_w:\, w \in W_c\}$ of $TL(X)$ such that
\begin{itemize}
\item[{\rm (i)}]  $\iota(c_w)=c_w$, 
\item[{\rm (ii)}] $c_w=\sum_{\substack{x \in W_c \\ x \leq w}}q^{-\frac{\ell(x)}{2}}L_{x,w}(q^{-\frac{1}{2}})t_x$,
\end{itemize}
where $\{ L_{x,w}(q^{-\frac{1}{2}}) \} \subseteq q^{-\frac{1}{2}}\mathbb{Z}[q^{-\frac{1}{2}}]$, $L_{x,x}(q^{-\frac{1}{2}})=1$, and $L_{x,w}(q^{-\frac{1}{2}})=0$ if $x \not \leq w$.
\end{teo}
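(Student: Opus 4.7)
The plan is to follow the classical Kazhdan--Lusztig template, adapted to $TL(X)$, splitting the argument into uniqueness and existence. The uniqueness part is essentially algebraic and can be done straight from the defining conditions (i) and (ii) together with Proposition \ref{a-pol}; the existence part requires an inductive construction, using the product by a single generator plus correction terms, in the same spirit as Humphreys' proof of Theorem \ref{kl-pol}.

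For uniqueness, suppose $\{c_w\}$ and $\{\widetilde{c}_w\}$ are two families satisfying (i) and (ii). Fix $w \in W_c(X)$ and let
\[ d \ug c_w - \widetilde{c}_w = \sum_{\substack{x \in W_c(X) \\ x < w}} q^{-\ell(x)/2}\, \gamma_x(q^{-1/2})\, t_x, \qquad \gamma_x \in q^{-1/2}\mathbb{Z}[q^{-1/2}]. \]
Since $\iota(d) = d$, I would apply $\iota$ to both sides, using Proposition \ref{a-pol} to expand each $\iota(t_x) = (t_{x^{-1}})^{-1}$ back in the $t$--basis. Then, starting from an $x$ maximal in Bruhat order among those with $\gamma_x \neq 0$ and comparing the coefficients of $t_x$ on both sides (where only the term $a_{x,x}(q)=1$ contributes), I obtain $\gamma_x(q^{1/2}) = \gamma_x(q^{-1/2})$, forcing $\gamma_x = 0$ because $\gamma_x$ already lies in $q^{-1/2}\mathbb{Z}[q^{-1/2}]$. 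Descending induction on the Bruhat order then kills all $\gamma_x$.

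For existence I would argue by induction on $\ell(w)$, with base case $c_e \ug t_e$. For the inductive step, choose $s \in S(X)$ with $sw < w$, set $w' = sw$, and consider the product $c_s \cdot c_{w'}$ where $c_s = q^{-1/2}(t_s + t_e)$ (one checks directly that $\iota(c_s) = c_s$). This product is automatically $\iota$--invariant. Expanding it by (\ref{t-basmolt}) and re--expressing, via Proposition \ref{d-pol}, any $t_y$ with $y \notin W_c(X)$ that appears, I would obtain an element of the form
\[ c_s \cdot c_{w'} = \sum_{\substack{x \in W_c(X) \\ x \leq w}} \alpha_{x}\, t_x, \qquad \alpha_x \in \mathcal{A}, \]
with leading coefficient (at $x = w$) equal to $q^{-\ell(w)/2}$. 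I would then subtract a $\mathbb{Z}[q^{1/2}]$--linear combination of the previously constructed $c_y$'s, $y < w$, chosen so that the result has all coefficients, relative to $q^{-\ell(x)/2} t_x$, lying in $q^{-1/2}\mathbb{Z}[q^{-1/2}]$. This is the standard $\mu$--correction: since the $c_y$ are $\iota$--invariant, $\iota$--invariance of the final element is preserved.

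The main obstacle will be verifying that the $\mu$--correction can actually be performed, i.e.\ that at each step the polynomial coefficient to be killed has the right symmetry so that the subtraction lands in $q^{-1/2}\mathbb{Z}[q^{-1/2}]$. In the classical Hecke algebra case, this follows from the fact that $\iota(c_s \cdot C'_{w'}) = c_s \cdot C'_{w'}$ and a degree estimate on the $R$--polynomials; here, the additional technicality is the intervention of $D_{x,y}(q)$ from Proposition \ref{d-pol} for $y \notin W_c(X)$, which must be shown not to disturb the positive/negative degree separation. I expect this to reduce to showing that the $D$--polynomials have coefficients and degrees compatible with the filtration by powers of $q^{1/2}$, a fact that is essentially built into Proposition \ref{d-pol} since $D_{x,y}(q) \in \mathbb{Z}[q]$. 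Once this is checked, the argument closes the induction and gives the basis with the required properties.
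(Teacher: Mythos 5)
Your uniqueness argument is correct and is the standard one: the unitriangularity of $\iota$ on the $t$--basis (Proposition \ref{a-pol}, with $a_{x,x}=1$) lets you compare coefficients of $t_{x_0}$ for $x_0$ maximal in the support of $c_w-\widetilde{c}_w$, and the resulting identity $\gamma_{x_0}(q^{1/2})=\gamma_{x_0}(q^{-1/2})$ kills $\gamma_{x_0}$ because the two sides live in $q^{1/2}\mathbb{Z}[q^{1/2}]$ and $q^{-1/2}\mathbb{Z}[q^{-1/2}]$ respectively. Note that the paper itself does not prove this theorem; it is quoted as a restatement of Green--Losonczy \cite[Theorem 3.6]{gl-cbh}, so there is no internal proof to compare against, but your existence argument does not close.

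The gap is exactly at the step you flag as ``the main obstacle,'' and it is not a technicality that follows from $D_{x,y}(q)\in\mathbb{Z}[q]$. After expanding $c_s\cdot c_{w'}$ and rewriting every $t_{sx}$ with $sx\notin W_c(X)$ via Proposition \ref{d-pol}, you must subtract previously constructed $c_y$'s so that (a) all coefficients of $q^{-\ell(x)/2}t_x$ for $x<w$ land in $q^{-1/2}\mathbb{Z}[q^{-1/2}]$ \emph{and} (b) $\iota$--invariance is preserved, which forces the subtracted multipliers to be symmetric under $q^{1/2}\mapsto q^{-1/2}$. Showing that the ``non-negative part'' of each coefficient is automatically symmetric requires a quantitative degree bound (in the Hecke algebra it is the bound $\deg P_{x,w}\le\frac{1}{2}(\ell(w)-\ell(x)-1)$, proved simultaneously by induction), and here the $D$--polynomials inject terms of a priori uncontrolled degree; Proposition \ref{d-pol} asserts nothing about their degrees. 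Moreover, the clean product formula you would want (Proposition \ref{icprod}) is only available for graphs satisfying (\ref{Cw-0}), whereas the theorem holds for arbitrary $X$. The standard way to repair the existence proof is to abandon the product construction entirely and invoke the purely formal IC-basis lemma (Du \cite{du-icb}, Lusztig's lemma), which is what Green--Losonczy do: using only the unitriangular involution data of Proposition \ref{a-pol} and the relation $\iota^2=\mathrm{id}$, one solves
\[
L_{x,w}(q^{-\frac{1}{2}})-L_{x,w}(q^{\frac{1}{2}})=-\sum_{\substack{y\in(x,w]_c}}q^{\frac{\ell(x)-\ell(y)}{2}}a_{x,y}(q)\,L_{y,w}(q^{\frac{1}{2}})
\]
by downward induction on $x$; the point is that $\iota^2=\mathrm{id}$ forces the right-hand side to be anti-invariant under $q^{1/2}\mapsto q^{-1/2}$, hence to have zero constant term and to be uniquely expressible as $p-\overline{p}$ with $p\in q^{-1/2}\mathbb{Z}[q^{-1/2}]$. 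That formal argument avoids the $D$--polynomials and the degree issue altogether; as written, your existence step is incomplete.
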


This basis is often called an \emph{IC basis} (see \cite[\S 2]{gl-cbh}).\\
Combining Theorem \ref{l-pol} with Proposition \ref{a-pol} we get
\begin{equation} \label{lpolapol}
L_{x,w}(q^{-\frac{1}{2}})=\sum_{y \in [x,w]_c}q^{\frac{\ell(x)-\ell(y)}{2}}a_{x,y}(q)L_{y,w}(q^{\frac{1}{2}}),
\end{equation}
for every $x,w \in W_c(X)$, with $[x,w]_c=\{y \in [x,w]:\, y \in W_c(X)\}$.

Comparing the definition of $c_w$ with that of $C_w'$, we notice that the polynomials $L_{x,w}(q^{-\frac{1}{2}})$ play the same role as $q^{\frac{\ell(x)-\ell(w)}{2}}P_{x,w}(q)$, where $P_{x,w}(q)$ are the Kazhdan--Lusztig polynomials defined in Theorem \ref{kl-pol}.
Since the Kazhdan--Lusztig basis and the IC basis are both $\iota$--invariant and since $\iota(J)=J$, it is natural to ask to what extent $\{\sigma(C_w'):\, w \in W(X) \}$ coincides with $\{c_w:\, w \in W_c(X)\}$. 

In particular, one may wonder whether the canonical projection $\sigma$ satisfies
\begin{equation} \label{Cw-0} 
\sigma(C_w') = \left\{ \begin{array}{ll} 
c_w  & \mbox{if $w \in W_c(X)$,} \\ 
0    & \mbox{if $w \not \in W_c(X)$.} 
\end{array} \right.
\end{equation}
If $X$ is a finite irreducible or affine Coxeter group, then relation (\ref{Cw-0}) holds if and only if $W_c(X)$ is a union of two–-sided Kazhdan-–Lusztig cells (see \cite[Lemma 2.4]{shi-fceII} and \cite[Theorem 2.2.3]{gl-fck}). On the other hand, in \cite[\S 3]{shi-fce} Shi shows that $W_c(X)$ is a union of two-–sided Kazhdan-–Lusztig cells if and only if $X$ is non--branching and $X \neq \widetilde{F_4}$. We sum up these properties in the following.

\begin{teo} \label{teocw0}
Let $X$ be a finite irreducible or affine Coxeter graph. Then, relation (\ref{Cw-0}) holds if and only if $X$ is non--branching and $X \neq \widetilde{F_4}$.
\end{teo}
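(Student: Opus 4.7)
The plan is to prove the theorem simply by chaining together the two citations that the author has just introduced. First, I would establish that the condition (\ref{Cw-0}) is equivalent to the structural statement that $W_c(X)$ is a union of two-sided Kazhdan--Lusztig cells of $W(X)$. Second, I would invoke Shi's classification, which enumerates exactly those finite irreducible or affine Coxeter graphs for which this cell-theoretic condition holds.

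For the first step, the key observation is that since $\iota(J(X)) = J(X)$ (by \cite[Lemma 1.4]{gl-cbh}) and $\iota(C_w') = C_w'$ (by Theorem \ref{kl-pol}), the image $\sigma(C_w') \in TL(X)$ is automatically $\iota$-invariant. If $w \in W_c(X)$, then $\sigma(C_w')$ has leading coefficient $1$ on $t_w$ with the correct power of $q^{-1/2}$; the uniqueness part of Theorem \ref{l-pol} would then force $\sigma(C_w') = c_w$. Consequently, relation (\ref{Cw-0}) is equivalent to $J(X)$ being spanned, as an $\A$-module, by $\{C_w' : w \in W(X) \setminus W_c(X)\}$. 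Since the theory of two-sided Kazhdan--Lusztig cells provides a filtration of $\He(X)$ by two-sided ideals each of which is spanned by Kazhdan--Lusztig basis elements indexed by a union of cells, $J(X)$ admits such a description if and only if $W_c(X)$ is itself a union of two-sided cells. This equivalence is precisely the content of \cite[Theorem 2.2.3]{gl-fck} combined with \cite[Lemma 2.4]{shi-fceII}, as quoted by the author.

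For the second step, I would cite Shi's classification \cite[\S 3]{shi-fce}: among finite irreducible or affine Coxeter graphs, $W_c(X)$ is a union of two-sided Kazhdan--Lusztig cells if and only if $X$ is non-branching and $X \neq \widetilde{F_4}$. Composing this biconditional with the one obtained in the first step proves the theorem in both directions.

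The genuine technical work hides in the two cited references, particularly Shi's classification, which rests on case-by-case analysis of two-sided cells for each Coxeter type. Since both references are explicitly invoked, the proof in the paper is essentially a bookkeeping step, and I would expect the author to write it very compactly, possibly even omitting it by simply referring the reader to these works.
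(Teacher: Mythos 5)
Your proposal matches the paper exactly: the theorem is presented as a summary of the two cited results, namely the equivalence of relation (\ref{Cw-0}) with $W_c(X)$ being a union of two--sided Kazhdan--Lusztig cells (\cite[Lemma 2.4]{shi-fceII}, \cite[Theorem 2.2.3]{gl-fck}) composed with Shi's classification \cite[\S 3]{shi-fce}, and no independent proof is given. Your prediction that the author would ``write it very compactly, possibly even omitting it'' is precisely what happens.
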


\section{Combinatorial properties of polynomials $a_{x,w}$}\label{sec: a}

The first part of this section deals with the study of the $D$--polynomials defined in Proposition \ref{d-pol}. We recall a recurrence relation for $\{D_{x,w}\}_{x \in W_c(X), w \in W(X)}$, where $X$ denotes an arbitrary Coxeter graph. Then we focus on the Coxeter graphs satisfying equation (\ref{Cw-0}) and obtain an explicit formula for the $D$--polynomials indexed by elements which satisfy particular properties.\\
In the second part of the section we study the family of polynomials $\{a_{x,w}\}_{x,w \in W_c(X)}$, which express the involution $\iota$ in terms of the $t$--basis, as explained in Proposition \ref{a-pol}. First, we obtain a recurrence relation for $a_{x,w}$, $X$ being an arbitrary Coxeter graph. Then we derive an explicit formula for $a_{x,w}$, with $x,w \in W_c(X)$ satisfying particular properties and $X$ such that equation (\ref{Cw-0}) holds. 

We begin with the following recursion for the $D$--polynomials (see \cite[Theorem 3.1]{pes-cptl}).

\begin{teo} \label{p-gc}
Let $X$ be an arbitrary Coxeter graph. Let $w \not \in W_c(X)$ and $s \in S(X)$ be such that $ws \not \in W_c(X)$, with $ws<w$. Then, for all $x \in W_c(X),\,x \leq w$, we have 
$$D_{x,w}=\widetilde{D_{x,w}} + \sum_{\substack{y \in W_c(X),\, ys \not \in W_c(X) \\ ys>y}}D_{x,ys}D_{y,ws},$$
where
\[
\widetilde{D_{x,w}}\ug\left\{
\begin{array}{ll}
D_{xs,ws}+(q-1)D_{x,ws}       & \mbox{ if } xs<x,           \\
qD_{xs,ws}                    & \mbox{ if } x<xs \in W_c(X),   \\
0                             & \mbox{ if } x<xs \not \in W_c(X).
\end{array} \right.
\] 
\end{teo}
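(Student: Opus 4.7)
The strategy is to use the length-additive factorization $t_w = t_{ws}\, t_s$, which holds in $TL(X)$ because $\ell(w) = \ell(ws)+1$ forces the Hecke algebra identity $T_w = T_{ws}\, T_s$ to descend through the projection $\sigma$. Expanding $t_{ws}$ via Proposition \ref{d-pol} gives
\[
t_w \;=\; t_{ws}\, t_s \;=\; \sum_{\substack{y \in W_c(X) \\ y \leq ws}} D_{y,ws}(q)\, t_y\, t_s,
\]
and the recursion then follows by computing each $t_y\, t_s$ and matching the coefficient of $t_x$ on both sides via linear independence of the $t$--basis.

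The next step is to split the $y$--sum into three mutually exclusive cases. If $ys < y$, then $ys \in W_c(X)$ automatically (since a reduced expression of $y$ can be chosen to end in $s$, and truncating a fully commutative reduced word preserves full commutativity), so (\ref{t-basmolt}) yields $t_y\, t_s = q\, t_{ys} + (q-1)\, t_y$. If $y < ys$ with $ys \in W_c(X)$, then $t_y\, t_s = t_{ys}$. If $y < ys$ with $ys \notin W_c(X)$, then $t_y\, t_s = t_{ys}$ is no longer a $t$--basis element, and a second application of Proposition \ref{d-pol} expands it as $\sum_{z \in W_c(X),\, z \leq ys} D_{z,ys}(q)\, t_z$.

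Fixing $x \in W_c(X)$ and collecting the coefficient of $t_x$, the third case produces exactly the sum $\sum_{y} D_{x,ys}(q)\, D_{y,ws}(q)$ over $y \in W_c(X)$ with $y < ys \notin W_c(X)$, reproducing the sum in the statement. The first two cases can contribute only when $y \in \{x, xs\}$, and a short case analysis on the position of $xs$ relative to $x$ then recovers $\widetilde{D_{x,w}}$: when $xs < x$, the term $y = xs$ in the ``$y < ys \in W_c(X)$'' case contributes $D_{xs,ws}$ and $y = x$ in the ``$ys < y$'' case contributes $(q-1)\, D_{x,ws}$; when $x < xs \in W_c(X)$, only $y = xs$ in the ``$ys < y$'' case contributes, giving $q\, D_{xs,ws}$; when $x < xs \notin W_c(X)$, neither $y = x$ nor $y = xs$ satisfies the case hypotheses, so $\widetilde{D_{x,w}} = 0$. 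The main obstacle is precisely this bookkeeping: one must confirm exhaustiveness of the three cases for $y$, check the closure property of $W_c(X)$ under removing a right descent, and verify that the channel $y = x$ arising in the third case (which genuinely occurs exactly when $x < xs \notin W_c(X)$) is correctly absorbed into the explicit sum rather than into $\widetilde{D_{x,w}}$.
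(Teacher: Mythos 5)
Your proposal is correct: expanding $t_w=t_{ws}t_s$ via Proposition \ref{d-pol}, splitting the sum according to the three cases for $t_yt_s$, and extracting the coefficient of $t_x$ is exactly the argument behind this recursion (the paper cites it from \cite{pes-cptl}, and the same scheme is carried out explicitly here in the proof of Proposition \ref{proarec}). Your bookkeeping of the boundary cases $y\in\{x,xs\}$, including the observation that the $y=x$ channel with $x<xs\notin W_c(X)$ lands in the explicit sum rather than in $\widetilde{D_{x,w}}$, is accurate.
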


From here to the end of this section we will denote by $X$ a Coxeter graph satisfying (\ref{Cw-0}). Observe that $D_{x,w}=\delta_{x,w}$ if $x,w \in W_c(X)$.
\begin{lem} \label{lemain}
For all $x \in W_c(X)$ and $w \not \in W_c(X)$, we have $$\sum_{x \leq y \leq w} D_{x,y}P_{y,w}=0.$$
\end{lem}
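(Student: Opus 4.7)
The plan is to apply the canonical projection $\sigma : \He(X) \to TL(X)$ to the Kazhdan--Lusztig basis element $C_w'$ and read off the coefficients of the $t$--basis. By Theorem \ref{teocw0}, our hypothesis on $X$ ensures that $\sigma(C_w') = 0$ whenever $w \notin W_c(X)$, and this vanishing statement, once expanded, is exactly the identity we want.

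Concretely, I would start from the defining formula
\[
C_w' = q^{-\ell(w)/2} \sum_{y \leq w} P_{y,w}(q)\, T_y
\]
given in (\ref{C'w}). Applying $\sigma$ and using $\sigma(T_y)=t_y$, then expanding each $t_y$ via Proposition \ref{d-pol} as $t_y = \sum_{x \in W_c(X),\, x \leq y} D_{x,y}(q)\, t_x$, one obtains
\[
\sigma(C_w') = q^{-\ell(w)/2} \sum_{x \in W_c(X)} \Bigl( \sum_{x \leq y \leq w} D_{x,y}(q)\, P_{y,w}(q) \Bigr) t_x,
\]
after swapping the order of summation (the inner sum is automatically truncated to $x \leq y$ since $D_{x,y}=0$ otherwise, and to $y \leq w$ since $P_{y,w}=0$ otherwise).

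Next, invoking Theorem \ref{teocw0} with the standing assumption that $X$ satisfies (\ref{Cw-0}), the hypothesis $w \notin W_c(X)$ gives $\sigma(C_w')=0$. Since $\{t_x : x \in W_c(X)\}$ is an $\A$--basis of $TL(X)$ by the theorem preceding (\ref{t-basmolt}), every coefficient in the above expansion must vanish, and this is precisely the claimed identity
\[
\sum_{x \leq y \leq w} D_{x,y}(q)\, P_{y,w}(q) = 0
\]
for each $x \in W_c(X)$ (the case $x \not\leq w$ being vacuous, as the sum is then empty). There is no real obstacle here; the only subtlety is to be careful with the summation ranges when interchanging the two sums, and to make sure the hypothesis (\ref{Cw-0}) is available so that Theorem \ref{teocw0} can be applied to conclude $\sigma(C_w')=0$.
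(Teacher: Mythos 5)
Your proof is correct and is essentially the intended argument: the paper defers to \cite[Lemma 3.6]{pes-cptl}, where the identity is obtained in exactly this way, by expanding $\sigma(C_w')$ in the $t$--basis via the $D$--polynomials and using that $\sigma(C_w')=0$ for $w\notin W_c(X)$ under the standing assumption (\ref{Cw-0}). The only cosmetic remark is that you do not need to invoke Theorem \ref{teocw0} at all, since (\ref{Cw-0}) is assumed directly for the graphs under consideration.
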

A proof of the preceding lemma appears in \cite[Lemma 3.6]{pes-cptl}. It is worth noting that Lemma \ref{lemain} implies
\begin{equation} \label{lemaineq}
D_{x,w} = -P_{x,w} -\sum_{\substack{t \not \in W_c(X) \\ x < t < w }}D_{x,t}P_{t,w},
\end{equation}
for all $x \in W_c(X)$ and $w \not \in W_c(X)$ such that $x < w$.

\begin{lem} \label{ddelta}
Let $x \in W_c(X)$ be such that $xs \not \in W_c(X)$ and let $w \not \in W_c(X)$ be such that $w>ws \in W_c(X)$. Then $$D_{x,w}=-\delta_{x,ws}.$$
\end{lem}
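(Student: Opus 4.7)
The plan is to apply equation (\ref{lemaineq}) and argue by induction on $\ell(w)$. Before the induction, I would invoke the lifting property of Bruhat order. Since $s$ is a right descent of $w$ (because $ws<w$) and the hypothesis $x \in W_c(X)$, $xs \notin W_c(X)$ forces $xs>x$ (any $xs<x$ would make $xs$ a prefix of $x \in W_c(X)$, hence FC), lifting gives $x \le ws$. So either $x=ws$ or $x<ws$ strictly.

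If $x=ws$, then $\ell(w)-\ell(x)=1$, so the set $\{t : x<t<w\}$ is empty and the sum in (\ref{lemaineq}) is vacuous. Since $P_{ws,w}=1$ by the standard fact about Kazhdan--Lusztig polynomials on Bruhat covers, equation (\ref{lemaineq}) collapses to $D_{ws,w}=-1=-\delta_{ws,ws}$.

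If $x<ws$, the goal is to show $D_{x,w}=0$, i.e.\ that the right-hand side of (\ref{lemaineq}) vanishes. I would split the sum $\sum_{x<t<w,\,t\notin W_c(X)}D_{x,t}P_{t,w}$ into three classes according to $t$: (i) $ts<t$ with $ts \in W_c(X)$, where the inductive hypothesis (applied to the pair $(x,t)$, which is legitimate since $\ell(t)<\ell(w)$) gives $D_{x,t}=-\delta_{x,ts}$; (ii) $ts<t$ with $ts \notin W_c(X)$, where Theorem \ref{p-gc} applied to $(t,s)$ provides a recursion for $D_{x,t}$ (noting that the $\widetilde{D_{x,t}}$ term vanishes under our hypothesis $xs \notin W_c(X)$); (iii) $ts>t$, where neither tool applies directly but lifting gives $t \le ws$. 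Substituting these expressions for $D_{x,t}$ and then invoking the Kazhdan--Lusztig recursion for $P_{x,w}$ and for $P_{t,w}$ (both using $s$ as right descent of $w$, and with $xs>x$ so that only the ``ascent'' branch of the KL recursion is active), the sum should telescope against the leading $-P_{x,w}$ term.

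The main obstacle is class (iii): for $t \notin W_c(X)$ with $ts>t$, lifting places $t$ below the FC element $u=ws$, but neither Lemma \ref{ddelta} (inductively) nor Theorem \ref{p-gc} directly computes $D_{x,t}$. I expect the resolution is to pair each such $t$ with $ts$ (which lies in class (i) or (ii), depending on whether $ts \in W_c(X)$), matching the contributions of $D_{x,t}P_{t,w}$ and $D_{x,ts}P_{ts,w}$ against the two monomial terms $q^{c}P_{x,u}$ and $q^{1-c}P_{xs,u}$ of the KL recursion for $P_{x,w}$, so that the subtracted $\mu$-sum in the KL recursion is exactly what cancels the remaining pieces.
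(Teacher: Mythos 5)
Your skeleton matches the paper's argument: induction, the identity (\ref{lemaineq}), the base case $x=ws$, the three-way split of the sum over $t\notin W_c(X)$ according to the behaviour of $t$ versus $ts$, and the idea of pairing each ascent $t$ with $ts$. But the step where everything must actually cancel is exactly where your proposal stops being a proof, and the gap is genuine. You propose to match the paired contributions against the two monomial terms and the $\mu$-sum of the full Kazhdan--Lusztig recursion for $P_{x,w}$; that is not the mechanism, and nothing in your sketch indicates how such a matching could be carried out (the $\mu$-sum involves polynomials $P_{x,z}$ for $z<ws$ with $zs<z$, which bear no evident relation to the terms $D_{x,t}P_{t,w}$ you need to kill). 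The only fact about Kazhdan--Lusztig polynomials the argument uses is the elementary identity $P_{t,w}=P_{ts,w}$ when $ws<w$ (\cite[Proposition 5.1.8]{bb-ccg}); the recursion and the $\mu$-coefficients never enter.

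The cancellation actually runs as follows. The single term $t=xs$ of your class (i) contributes $-D_{x,xs}P_{xs,w}=P_{xs,w}=P_{x,w}$, which cancels the leading $-P_{x,w}$, and every other class-(i) term vanishes termwise because the inductive hypothesis gives $D_{x,t}=-\delta_{x,ts}=0$ for $t\neq xs$. The key point you are missing concerns class (ii): for such an element $t=zs$ (so $z=ts\notin W_c(X)$ and $z<zs$), Theorem \ref{p-gc} with vanishing $\widetilde{D}$-term gives $D_{x,zs}=\sum_{u}D_{x,us}D_{u,z}$, the sum running over $u\in W_c(X)$ with $u<us\notin W_c(X)$; each factor $D_{x,us}$ is again an instance of the lemma with strictly smaller length difference, so a \emph{second} application of the inductive hypothesis turns it into $-\delta_{x,u}$ and yields $D_{x,zs}=-D_{x,z}$ (equation (\ref{dxzs0}) in the paper). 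Combined with $P_{zs,w}=P_{z,w}$, each class-(iii) term $D_{x,z}P_{z,w}$ then cancels exactly against its class-(ii) partner $D_{x,zs}P_{zs,w}$, with nothing left over for the $\mu$-sum to absorb. So the resolution of your ``main obstacle'' is this inner use of the inductive hypothesis inside the recursion of Theorem \ref{p-gc}, not an appeal to the Kazhdan--Lusztig recursion.
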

\begin{proof}
We proceed by induction on $\ell(x,w)$. If $\ell(x,w)=1$, then $D_{x,w}=D_{x,xs}=-1=-\delta_{x,ws}$. Suppose $\ell(x,w)>1$. 
Recall that $P_{x,w}(q)=P_{xs,w}(q)$, for every $x \leq w$ such that $ws<w$ (see, \textrm{e.g.}, \cite[Proposition 5.1.8]{bb-ccg}). Then, from  (\ref{lemaineq}) we get 
\begin{eqnarray}
D_{x,w}   & = & -P_{x,w} -\sum_{\substack{t \not \in W_c(X) \\ x < t < w }}D_{x,t}P_{t,w}  \nonumber \\
          & = & -P_{x,w} -D_{x,xs}P_{xs,w}-\sum_{\substack{t \not \in W_c(X), t \neq xs \\ x < t < w }}D_{x,t}P_{t,w}  \nonumber \\
          & = & -P_{x,w} -D_{x,xs}P_{x,w}-\sum_{\substack{t \not \in W_c(X), t \neq xs \\ x < t < w }}D_{x,t}P_{t,w}  \nonumber \\
          & = & -\sum_{\substack{t \not \in W_c(X), t \neq xs \\ x < t < w }}D_{x,t}P_{t,w}  \nonumber \\
          & = & -\sum_{\substack{t>ts \in W_c(X), t \neq xs \\ x < t < w}}D_{x,t}P_{t,w}-\sum_{\substack{t>ts \not \in W_c(X) \\ x < t < w }}D_{x,t}P_{t,w}-\sum_{\substack{t<ts \\ x < t < w }}D_{x,t}P_{t,w}.  \nonumber
\end{eqnarray}
By induction hypothesis, the term $D_{x,t}$ in the first sum is equal to $-\delta_{x,ts}$, since $\ell(x,t)<\ell(x,w)$. Therefore, the first sum is zero. On the other hand, the second and the third sums can be written as 
\begin{equation} \label{iszero}
-\sum_{\substack{z \not \in W_c(X) \\ x<z<zs<w}} P_{z,w} \left(D_{x,zs}+D_{x,z} \right),
\end{equation}
since $t \not \in W_c(X),\, t<ts$ implies $ts \not \in W_c(X)$. To prove the statement we have to show that the term (\ref{iszero}) is zero. First, observe that $\ell(x,z)<\ell(x,w)$. Moreover, by Proposition \ref{p-gc} and by induction hypothesis, we achieve
\begin{equation}\label{dxzs0}
D_{x,zs}=\sum_{\substack{u \in W_c(X) \\ u<us \not \in W_c(X)}} D_{x,us}D_{u,z}=\sum_{\substack{u \in W_c(X) \\ u<us \not \in W_c(X)}} (-\delta_{x,u})D_{u,z}=-D_{x,z}.
\end{equation}
We conclude that $D_{x,zs}+D_{x,z}=0$, for all $z \not \in W_c(X)$ such that $x<z<zs<w$, and so the sum in (\ref{iszero}) is zero. \qed
\end{proof}

The next property for $D$--polynomials will be needed at the end of this section.
\begin{prop} \label{pro dsgn}
Let $w \in W(X)$. Then 
$$\sum_{\substack{x \in W_c(X) \\ x \leq w}}\varepsilon_x D_{x,w}=\varepsilon_w.$$
\end{prop}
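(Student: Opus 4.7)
The plan is to exploit a one-dimensional ``sign representation'' of $TL(X)$. First I would define the $\A$-algebra homomorphism $\phi_0 : \He(X) \to \A$ by setting $\phi_0(T_s) = -1$ for every $s \in S(X)$. Well-definedness reduces to checking the quadratic relation $T_s^2 = (q-1)T_s + qT_e$, since the braid relations are preserved trivially by a scalar assignment; and indeed $(-1)^2 = (q-1)(-1) + q = 1$. A straightforward induction on length, using (\ref{prod}), then gives $\phi_0(T_w) = \varepsilon_w$ for every $w \in W(X)$.

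Next I would show that $\phi_0$ annihilates every generator of the ideal $J(X)$. For a pair $(s_i, s_j)$ of non-commuting generators with $m := m(s_i,s_j) < \infty$, the parabolic subgroup $\langle s_i, s_j \rangle$ is dihedral of order $2m$, with Poincar\'e polynomial $1 + 2q + 2q^2 + \cdots + 2q^{m-1} + q^m$. Evaluating $\phi_0$ on the corresponding generator of $J(X)$ amounts to substituting $q = -1$ into this polynomial, and a short geometric-sum calculation gives value $0$ for every $m \geq 2$. Hence $\phi_0$ factors through the canonical projection $\sigma : \He(X) \to TL(X)$, producing a homomorphism $\phi : TL(X) \to \A$ with $\phi(t_w) = \varepsilon_w$ for all $w \in W(X)$.

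The conclusion is then immediate: applying $\phi$ to both sides of the expansion of Proposition \ref{d-pol},
\[
t_w = \sum_{\substack{x \in W_c(X) \\ x \leq w}} D_{x,w}(q)\, t_x,
\]
yields $\varepsilon_w = \sum_{x \in W_c(X),\, x \leq w} \varepsilon_x D_{x,w}(q)$, as desired. The only step with any real content is the dihedral vanishing check, which I expect to be the main (though entirely routine) obstacle; once the sign character has been shown to descend to $TL(X)$, the rest is formal. It is worth noting that this argument does not use the standing hypothesis that $X$ satisfies~(\ref{Cw-0}), so the identity in fact holds for an arbitrary Coxeter graph.
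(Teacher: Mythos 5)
Your proof is correct, and it takes a genuinely different route from the paper's. The paper argues by induction on $\ell(w)$: it invokes the identity $D_{x,w}=-P_{x,w}-\sum_{t\notin W_c(X),\,x<t<w}D_{x,t}P_{t,w}$ (equation (\ref{lemaineq}), a consequence of Lemma \ref{lemain}), interchanges the order of summation, applies the inductive hypothesis to the inner sums, and finishes with the classical fact that $\sum_{x\le w}\varepsilon_xP_{x,w}=0$ for $w\ne e$. That argument is tied to the standing hypothesis that $X$ satisfies (\ref{Cw-0}), since Lemma \ref{lemain} is proved only in that setting. Your argument instead pushes the sign character $T_s\mapsto -1$ of $\He(X)$ down to $TL(X)$: the only substantive point is that the alternating length generating function of a finite dihedral group of order $2m$, namely $1+2(-1)+2(1)+\cdots+2(-1)^{m-1}+(-1)^m$, vanishes for all $m\ge 2$, which you verify correctly; the identity then falls out by applying the resulting character $\phi$ to the expansion $t_w=\sum_x D_{x,w}t_x$ of Proposition \ref{d-pol}. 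What your approach buys is real: it avoids induction and makes no use of Kazhdan--Lusztig polynomials, and, as you note, it establishes the identity for an \emph{arbitrary} Coxeter graph, since Proposition \ref{d-pol} and the definition of $J(X)$ require no hypothesis on $X$ --- a genuine strengthening of the statement as placed in the paper. What the paper's proof buys is coherence with the surrounding development: it reuses the $D$--$P$ relation (\ref{lemaineq}) that is needed elsewhere in the section and exhibits the proposition as the $D$-polynomial analogue of the alternating-sum identity for Kazhdan--Lusztig polynomials. One small presentational remark: rather than invoking the presentation of $\He(X)$ by braid and quadratic relations (which the paper never states), you could define $\phi_0$ directly on the $T$-basis by $\phi_0(T_w)=\varepsilon_w$ and check multiplicativity from (\ref{prod}); this keeps the argument entirely within the paper's stated framework.
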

\begin{proof}
We proceed by induction on $\ell(w)$. The proposition is trivial if $w \in W_c(X)$, which covers the case $\ell(w) \leq 2$. Suppose that $w \not \in W_c(X)$. Then, by (\ref{lemaineq}) we have
\begin{eqnarray}
\sum_{\substack{x \in W_c(X) \\ x \leq w}}\varepsilon_x D_{x,w} &=& \sum_{\substack{x \in W_c(X) \\ x < w}}\varepsilon_x(-P_{x,w}) + \sum_{\substack{x \in W_c(X) \\ x < w}}\varepsilon_x \left( -\sum_{\substack{t \not \in W_c(X) \\ x<t<w}} D_{x,t}P_{t,w} \right)   \nonumber \\
      &=& -\sum_{\substack{x \in W_c(X) \\ x<w}} \varepsilon_x P_{x,w} - \sum_{\substack{t \not \in W_c(X) \\ t<w}} P_{t,w} \left( \sum_{\substack{x \in W_c(X) \\ x<t}} \varepsilon_x D_{x,t} \right) \nonumber \\  
      &=& -\sum_{\substack{x \in W_c(X) \\ x<w}} \varepsilon_x P_{x,w} - \sum_{\substack{t \not \in W_c(X) \\ t<w}} P_{t,w} \varepsilon_t \nonumber \\  
      &=& -\sum_{x<w} \varepsilon_x P_{x,w}, \nonumber
\end{eqnarray} 
and the statement follows  from the fact that $\sum_{x \leq w} \varepsilon_x P_{x,w}=0$, for every $w \in W(X)\setminus \{e\}$ (see \cite[\S 5, Exercise 17]{bb-ccg}). \qed
\end{proof}

Now, let us turn our attention to the study of the polynomials $\{a_{x,w}\}_{x,w \in W_c(X)}$.
\begin{prop} \label{proarec}
Let $X$ be an arbitrary Coxeter graph. Let $w \in W_c(X)$ and $s \in S(X)$ be such that $w>ws \in W_c(X)$. Then, for all $x \in W_c(X),\,x \leq w$, we have 
$$a_{x,w}=\widetilde{a_{x,w}} + \sum_{\substack{y \in W_c(X),\, ys \not \in W_c(X) \\ ys>y}}D_{x,ys}a_{y,ws},$$
where
\[
\widetilde{a_{x,w}}\ug\left\{
\begin{array}{ll}
a_{xs,ws}                        & \mbox{ if } x>xs,           \\
qa_{xs,ws}+(1-q)a_{x,ws}         & \mbox{ if } x<xs \in W_c(X),   \\
(1-q)a_{x,ws}                    & \mbox{ if } x<xs \not \in W_c(X).
\end{array} \right.
\] 
\end{prop}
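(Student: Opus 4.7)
The plan is to derive the recursion by factoring $t_{w^{-1}}$ on the left. Since $s$ is a right descent of $w$, it is a left descent of $w^{-1}$, so $w^{-1}$ admits a reduced expression starting with $s$ and hence, in the Hecke algebra, $T_{w^{-1}} = T_s T_{(ws)^{-1}}$. Projecting to $TL(X)$ and inverting yields
$$(t_{w^{-1}})^{-1} = (t_{(ws)^{-1}})^{-1}\, t_s^{-1}.$$
Substituting Proposition \ref{a-pol} on both sides, using $t_s^{-1} = q^{-1}(t_s - (q-1)t_e)$, and cancelling the common factor $q^{-\ell(w)}$, I would obtain the key identity
$$\sum_{y \in W_c(X),\, y \leq w} a_{y,w}\, t_y \;=\; \left(\sum_{y \in W_c(X),\, y \leq ws} a_{y,ws}\, t_y\right)(t_s - (q-1)t_e).$$

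Next, I would expand the products $t_y t_s$ on the right. For $y \in W_c(X)$, formula (\ref{t-basmolt}) gives $t_y t_s = q t_{ys} + (q-1) t_y$ if $ys < y$, and $t_y t_s = t_{ys}$ if $y < ys \in W_c(X)$; in the remaining case $y < ys \notin W_c(X)$, Proposition \ref{d-pol} applied to $t_{ys}$ gives $t_y t_s = \sum_{z \in W_c(X)} D_{z,ys}\, t_z$. Substituting these three expansions and combining with the $-(q-1)\sum a_{y,ws} t_y$ correction produces an explicit $t$-basis expansion whose coefficients must agree, in each $t_x$, with $a_{x,w}$.

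I would then fix $x \in W_c(X)$ with $x \leq w$ and extract the coefficient of $t_x$. The direct contributions come from $y=x$ (which survives only when $xs < x$, in which case the $(q-1)a_{x,ws}$ produced by the first case of $t_y t_s$ cancels against the $-(q-1)a_{x,ws}$ correction) and from $y=xs$ (which requires $xs \in W_c(X)$, and produces $a_{xs,ws}$ when $xs < x$ via the second case, or $q\, a_{xs,ws}$ when $x < xs$ via the first case). The terms from the third expansion assemble into $\sum_{y : ys > y,\, ys \notin W_c(X)} D_{x,ys}\, a_{y,ws}$. Splitting into the three cases $x > xs$, $x < xs \in W_c(X)$, and $x < xs \notin W_c(X)$ reproduces exactly the three branches of $\widetilde{a_{x,w}}$ in the statement.

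I expect the main obstacle to be the bookkeeping of the several contributions in the coefficient extraction, especially making sure that terms such as $a_{xs,ws}$ are either legitimately present or automatically zero. This is handled by the convention that $a_{y,ws}=0$ when $y \not\leq ws$, together with a brief appeal to the Lifting Property to confirm that the non-zero contributions occur only when the relevant Bruhat comparisons hold.
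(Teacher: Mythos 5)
Your proposal is correct and follows essentially the same route as the paper's proof: factor $(t_{w^{-1}})^{-1}=(t_{(ws)^{-1}})^{-1}t_s^{-1}$, expand each $t_yt_s$ via (\ref{t-basmolt}) and, in the non--fully--commutative case, via Proposition \ref{d-pol}, then extract the coefficient of $t_x$. The case analysis and cancellations you describe match the paper's computation exactly.
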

\begin{proof}
On the one hand, by Proposition \ref{a-pol}, we have 
\[ (t_{w^{-1}})^{-1}=q^{-\ell(w)}\sum_{\substack{y \in W_c(X) \\ y\leq w}}a_{y,w} t_y. \]
On the other hand, letting $v \stackrel{\rm def}{=} ws$, we get
\begin{eqnarray}
(t_{w^{-1}})^{-1} & = & (t_{v^{-1}})^{-1}(t_s)^{-1} \nonumber \\
                & = & q^{-\ell(v)}\sum_{\substack{y \in W_c(X) \\ y\leq v}}a_{y,v}t_y \cdot q^{-1}(t_s-(q-1)t_e)    \nonumber \\
                & = & q^{-\ell(w)} \left(\sum_{\substack{y \in W_c(X) \\ y\leq v}}a_{y,v}t_yt_s -(q-1)\sum_{\substack{y \in W_c(X) \\ y\leq v}}a_{y,v}t_y  \right)   \nonumber \\
                & = & q^{-\ell(w)} \left(\sum_{\substack{y \in W_c(X),\, ys \in W_c(X) \\ y\leq v,\,ys>y}} a_{y,v} t_{ys} + \sum_{\substack{y \in W_c(X),\, ys \not \in W_c(X) \\ y\leq v,\,ys>y}} a_{y,v} t_{ys} \right)  \nonumber \\
                &   & +\: q^{-\ell(w)} \left(\sum_{\substack{y \in W_c(X) \\ y\leq v,\,ys<y}} a_{y,v} t_{ys}-(q-1)\sum_{\substack{y \in W_c(X) \\ y\leq v}}a_{y,v}t_y \right) \nonumber \\
                & = & q^{-\ell(w)} \left(\sum_{\substack{y \in W_c(X),\, ys \in W_c(X) \\ y\leq v,\,ys>y}} a_{y,v} t_{ys} + \sum_{\substack{y \in W_c(X),\, ys \not \in W_c(X) \\ y\leq v,\,ys>y}} a_{y,v} \left(\sum_{\substack{z \in W_c(X) \\ z<sy}}D_{z,sy}t_z \right) \right) \nonumber \\
                &   & +\: q^{-\ell(w)} \left(\sum_{\substack{y \in W_c(X) \\ y\leq v,\,ys<y}} a_{y,v} (qt_{ys}+(q-1)t_y)-(q-1)\sum_{\substack{y \in W_c(X) \\ y\leq v}}a_{y,v}t_y \right) \nonumber \\                 
                & = & q^{-\ell(w)} \left(\sum_{\substack{z \in W_c(X) \\ z\leq v,\,z>zs}} a_{zs,v} t_{z} + \sum_{\substack{z \in W_c(X) \\ z<vs}} \left(\sum_{\substack{y \in W_c(X),\, ys \not \in W_c(X) \\ y \leq v,\,ys>y}} D_{z,sy}a_{y,v} \right)t_z \right)\nonumber \\
                &   & +\: q^{-\ell(w)} \left(\sum_{\substack{y \in W_c(X) \\ y\leq v,\,ys<y}} a_{y,v} qt_{ys}+ \sum_{\substack{z \in W_c(X) \\ z\leq v,\,z<zs}}  (q-1)t_{zs}-(q-1)\sum_{\substack{y \in W_c(X) \\ y\leq v}}a_{y,v}t_y \right), \nonumber
\end{eqnarray}
and the statement follows by extracting the coefficient of $t_x$. \qed
\end{proof}
From now on, we will assume $X$ to be any Coxeter graph satisfying equation (\ref{Cw-0}).
\begin{cor} \label{aqa}
Let $x,w \in W_c(X)$. If there exists $s \in S(X)$ such that $ws<w$ and $x<xs \not \in W_c(X)$, then $$a_{x,w}=-qa_{x,ws}.$$
\end{cor}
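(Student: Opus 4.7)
The plan is to combine Proposition \ref{proarec} (the recurrence for $a_{x,w}$) with Lemma \ref{ddelta} (which evaluates certain $D$-polynomials as Kronecker deltas). Since $w \in W_c(X)$ and $ws < w$, the element $ws$ is a subword of a reduced expression for $w$ and hence $ws \in W_c(X)$; thus Proposition \ref{proarec} applies. Because the hypothesis $x < xs \not\in W_c(X)$ holds, the third case of the piecewise formula yields $\widetilde{a_{x,w}} = (1-q)a_{x,ws}$, so
\[
a_{x,w} \;=\; (1-q)\,a_{x,ws} \;+\; \sum_{\substack{y \in W_c(X),\, ys \not\in W_c(X) \\ ys>y}} D_{x,ys}\,a_{y,ws}.
\]

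The next step is to identify which terms of the sum are nonzero. For each index $y$ of the sum, the element $ys$ satisfies $ys \notin W_c(X)$, $ys > (ys)s = y$, and $y \in W_c(X)$; so $ys$ meets the hypotheses on ``$w$'' in Lemma \ref{ddelta}. Since $x \in W_c(X)$ and $xs \notin W_c(X)$, Lemma \ref{ddelta} gives
\[
D_{x,ys} \;=\; -\,\delta_{x,(ys)s} \;=\; -\,\delta_{x,y}.
\]
The indexing conditions $y \in W_c(X)$, $ys \notin W_c(X)$, $ys > y$ are all satisfied by $y = x$ itself (by hypothesis), so exactly one term survives in the sum, contributing $-a_{x,ws}$. (If $x \not\le ws$ then $a_{x,ws} = 0$, and one checks that $a_{x,w} = 0$ as well since $ws < w$, so the identity holds trivially in that case.)

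Putting the two pieces together yields
\[
a_{x,w} \;=\; (1-q)\,a_{x,ws} \;-\; a_{x,ws} \;=\; -q\,a_{x,ws},
\]
as required. The only subtlety is bookkeeping in the second step, namely recognizing that the index $y = x$ is admissible in the sum and checking that Lemma \ref{ddelta} applies with $x$ playing the role of ``$x$'' and $ys$ playing the role of ``$w$''; once that is verified, the collapse of the sum to a single $-a_{x,ws}$ is immediate, and the whole argument reduces to a one-line substitution into Proposition \ref{proarec}.
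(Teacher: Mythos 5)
Your proof is correct and takes essentially the same route as the paper's: apply Proposition \ref{proarec} in the case $x<xs\not\in W_c(X)$ to get $a_{x,w}=(1-q)a_{x,ws}+\sum_y D_{x,ys}a_{y,ws}$, then collapse the sum via $D_{x,ys}=-\delta_{x,y}$ from Lemma \ref{ddelta}. The additional verifications you include (that $ws\in W_c(X)$ so the recurrence applies, and that $y=x$ is an admissible index) are sound but do not change the argument.
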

\begin{proof}
By Proposition \ref{proarec}, we have 
$$a_{x,w}=(1-q)a_{x,ws}+\sum_{\substack{y \in W_c(X),\, ys \not \in W_c(X) \\ ys>y}}D_{x,ys}a_{y,ws}.$$
On the other hand, by Lemma \ref{ddelta}, $D_{x,ys}=-\delta_{x,y}$. Therefore 
$$a_{x,w}=(1-q)a_{x,ws}-\sum_{\substack{y \in W_c(X),\, ys \not \in W_c(X) \\ ys>y}} \delta_{x,y} a_{y,ws}=(1-q)a_{x,ws}-a_{x,ws},$$
and the statement follows. \qed
\end{proof}

In the sequel we will need the following result (see \cite[Proposition 4.1]{pes-cptl}).
\begin{prop}\label{c-apol}
Let $x,w \in W_c(X)$ be such that $x \leq w$. Then
\begin{equation} \label{apolform}
a_{x,w}(q)=\varepsilon_x \varepsilon_w R_{x,w}(q)+\sum_{\substack{y \not \in W_c(X) \\ x < y < w}} \varepsilon_y \varepsilon_w R_{y,w}(q) D_{x,y}(q).
\end{equation}
\end{prop}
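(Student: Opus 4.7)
The plan is to apply the canonical projection $\sigma:\He(X)\to TL(X)$ to the formula of Theorem~\ref{r-pol} for $T_{w^{-1}}^{-1}$, and then compare the resulting expression with the definition of the $a$--polynomials from Proposition~\ref{a-pol}, re--expressing any non--fully--commutative $t_x$ in the $t$--basis by means of Proposition~\ref{d-pol}.

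First I would observe that, because $\sigma$ is a surjective ring homomorphism sending $T_{w^{-1}}$ to $t_{w^{-1}}$, and because the element $t_{w^{-1}}$ is invertible in $TL(X)$ (its inverse being furnished by Proposition~\ref{a-pol}), one has $\sigma\bigl(T_{w^{-1}}^{-1}\bigr)=(t_{w^{-1}})^{-1}$. Applying $\sigma$ to the identity of Theorem~\ref{r-pol} then yields
\[
(t_{w^{-1}})^{-1}=\varepsilon_w q^{-\ell(w)}\sum_{x\leq w}\varepsilon_x R_{x,w}(q)\,t_x.
\]
Splitting this sum according to whether $x\in W_c(X)$ or $x\notin W_c(X)$, and expanding the non--fully--commutative terms by Proposition~\ref{d-pol} as
\[
t_x=\sum_{\substack{z\in W_c(X)\\ z\leq x}}D_{z,x}(q)\,t_z,
\]
I would rewrite the right--hand side as a linear combination of basis elements $t_z$ with $z\in W_c(X)$, $z\leq w$. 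After interchanging the order of summation, the coefficient of $t_z$ becomes
\[
\varepsilon_w q^{-\ell(w)}\left[\varepsilon_z R_{z,w}(q)+\sum_{\substack{y\notin W_c(X)\\ z<y<w}}\varepsilon_y R_{y,w}(q)\,D_{z,y}(q)\right],
\]
where I have used that $w\in W_c(X)$ (so $y<w$ rather than $y\leq w$) and that $z\neq y$ since $z\in W_c(X)$ and $y\notin W_c(X)$.

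On the other hand, Proposition~\ref{a-pol} gives
\[
(t_{w^{-1}})^{-1}=q^{-\ell(w)}\sum_{\substack{y\in W_c(X)\\ y\leq w}}a_{y,w}(q)\,t_y,
\]
so equating the coefficients of $t_z$ in the two expansions and cancelling the factor $q^{-\ell(w)}$ yields the claimed formula after renaming $z\to x$ and $y$ appropriately. The only mild point of care is the bookkeeping when $x\in W_c(X)$: in that case $D_{x,x}=1$ and no further expansion is needed, which is exactly why the term $\varepsilon_x\varepsilon_w R_{x,w}(q)$ appears by itself outside the sum. There is no serious obstacle here; the argument is essentially a direct translation from $\He(X)$ to $TL(X)$ via the projection, and the main thing to verify carefully is the manipulation of the double sum.
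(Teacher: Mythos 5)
Your proposal is correct: applying $\sigma$ to the $R$--polynomial expansion of $T_{w^{-1}}^{-1}$, expanding the non--fully--commutative $t_y$ via the $D$--polynomials, and equating coefficients of the $t$--basis with the expansion in Proposition~\ref{a-pol} is exactly the right argument, and your bookkeeping (in particular that $y\neq w$ and $y\neq z$ force strict inequalities) is accurate. The paper itself does not reprove this statement but cites it from \cite[Proposition 4.1]{pes-cptl}, where the proof is essentially this same projection argument, so there is nothing to add.
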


The recursion given in Corollary \ref{aqa} can sometimes be solved explicitly.
\begin{prop}
Let $s_is_{i+1}\cdots s_{i+k}s_{i-j} s_{i-j+1}\cdots s_i \cdots s_{i+k-1}$ be a reduced expression for $w \in W(A_n)$ and let $s_is_{i+1}\cdots s_{i+k}$ be a reduced expression for $x \in W(A_n)$, with $i \in [2,n],\,k \in [1,n-i],\,j \in [1,i-1]$. Then $$a_{x,w}(q)=(-q)^k(1-q)^j.$$
\end{prop}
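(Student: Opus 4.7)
The plan is a three-stage reduction. First I peel off the last $k$ letters of $w$ by iterating Corollary \ref{aqa}, which extracts a factor $(-q)^k$ and leaves $w_k := s_i s_{i+1} \cdots s_{i+k} s_{i-j} s_{i-j+1} \cdots s_{i-1}$. Concretely, set $w^{(m)} := s_i \cdots s_{i+k} s_{i-j} \cdots s_{i+k-m-1}$ for $0 \leq m \leq k$, so $w^{(0)} = w$ and $w^{(k)} = w_k$. At step $m$ I take $s = s_{i+k-m-1}$, which is the last letter of the displayed reduced expression and hence a right descent of $w^{(m)}$; an inversion count on the one-line notation of $x$ gives $\ell(xs_{i+k-m-1}) = \ell(x) + 1$; and commuting $s_{i+k-m-1}$ leftward past $s_{i+k}, s_{i+k-1}, \ldots, s_{i+k-m+1}$ in $xs_{i+k-m-1}$ exposes the substring $s_{i+k-m-1} s_{i+k-m} s_{i+k-m-1}$, witnessing $xs_{i+k-m-1} \not\in W_c(A_n)$. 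A direct inspection of $w^{(m)}$ as a permutation (it has exactly two right descents, and the values exceeding $i$ occupy the intervening positions in their natural order) confirms $w^{(m)} \in W_c(A_n)$. Corollary \ref{aqa} therefore applies throughout and the iteration yields $a_{x,w} = (-q)^k a_{x,w_k}$.

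Next, I apply Proposition \ref{proarec} in two further stages. In the second stage I iterate $k$ times, taking $s$ to be $s_{i+k}, s_{i+k-1}, \ldots, s_{i+1}$ in turn. Each such $s_{i+m}$ (with $m \geq 1$) commutes with every generator in the second strip, so it can be shifted to the end of the current element's reduced expression and is a right descent. Since $xs_{i+m} < x$, Proposition \ref{proarec} gives $\widetilde{a} = a_{xs_{i+m}, ws_{i+m}}$, and the sum term vanishes: at every step the current $w$ has each generator appearing exactly once, so by the subword property every Bruhat predecessor $y$ has each generator at most once, and then $ys_{i+m}$ cannot host either forbidden substring $s_{i+m-1} s_{i+m} s_{i+m-1}$ (not enough copies of $s_{i+m-1}$) or $s_{i+m+1} s_{i+m} s_{i+m+1}$ ($s_{i+m+1}$ is outside the support). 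Iterating $k$ times reduces $a_{x, w_k}$ to $a_{s_i, s_i s_{i-j} \cdots s_{i-1}}$. In the third stage I induct on $j$ using Proposition \ref{proarec} with $s = s_{i-1}$: since $s_i s_{i-1} \in W_c(A_n)$ and $s_i < s_i s_{i-1}$, the middle case gives $\widetilde{a} = q \, a_{s_i s_{i-1}, s_i s_{i-j} \cdots s_{i-2}} + (1-q) \, a_{s_i, s_i s_{i-j} \cdots s_{i-2}}$; the first summand vanishes because $s_{i-1}$ is not in the support of $s_i s_{i-j} \cdots s_{i-2}$, and the sum term vanishes by the same argument as in the second stage. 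Hence $a_{s_i, s_i s_{i-j} \cdots s_{i-1}} = (1-q)^j$ by induction (with trivial base $j = 0$), and combining the three stages yields $a_{x, w} = (-q)^k (1-q)^j$.

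The main technical obstacle is verifying, at each invocation, that the sum term in Proposition \ref{proarec} vanishes and that every intermediate element remains in $W_c(A_n)$; both points reduce to the observation that a subword of a reduced expression with pairwise distinct generators cannot host a braid pattern $s_p s_q s_p$.
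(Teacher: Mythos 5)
Your argument is correct, and its first half --- iterating Corollary \ref{aqa} along the suffix $s_{i+k-1},\dots,s_i$ to extract the factor $(-q)^k$ and reduce to $w'=s_i\cdots s_{i+k}s_{i-j}\cdots s_{i-1}$ --- is exactly what the paper does. Where you genuinely diverge is in evaluating $a_{x,w'}$. The paper stops recursing at that point: it notes that $[x,w']$ is a Boolean lattice contained entirely in $W_c(A_n)$, so the closed formula of Proposition \ref{c-apol} collapses to $a_{x,w'}=\varepsilon_x\varepsilon_{w'}R_{x,w'}$, and the known evaluation $R_{x,w'}=(q-1)^{\ell(w')-\ell(x)}$ for Boolean intervals gives $(1-q)^j$ in one stroke. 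You instead keep running the recursion of Proposition \ref{proarec}: first cancelling the common prefix $s_{i+k},\dots,s_{i+1}$ via the descent case (correctly stopping at $s_{i+1}$, since $s_i$ is not a right descent of $s_is_{i-j}\cdots s_{i-1}$), then peeling the second strip one letter at a time via the middle case, where the $q\,a_{xs,ws}$ term dies because $s$ leaves the support of $ws$. In both stages the correction sum vanishes for the reason you isolate at the end: every $y\leq ws$ is a product of pairwise distinct generators, so $ys$ is again a product of distinct generators and hence fully commutative, making the index set of the sum empty. Both routes are sound; the paper's is shorter but imports two external facts (Proposition \ref{c-apol} and the $R$-polynomial formula for Boolean intervals from Brenti's paper), while yours is longer but purely recursive and self-contained within the two recursions already established. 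The only point worth spelling out more carefully in a final write-up is that each intermediate $w^{(m)}$ lies in $W_c(A_n)$; this is immediate since $w^{(m)}$ is a left factor of $w$ in the weak order, and a braid substring in a reduced word of a left factor would extend to one for $w$.
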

\begin{proof} 
Observe that $x<xs_{i+h} \not \in W_c(X)$, for every $h \in [0,k-1]$ and that $ws_{i+k-1}<w$. By applying Corollary \ref{aqa} to the triple $(x,w,s_{i+k-1})$ we get $a_{x,w}=-qa_{x,ws_{i+k-1}}$. Repeat the same process with the triple $(x,ws_{i+k-1},s_{i+k-2})$, and so on. After $k$ iteration of the process we get $a_{x,w}(q)=(-q)^k a_{x,ws_{i+k-1}\cdots s_{i}}=(-q)^k a_{x,w'}(q)$, where we set $$w'=s_i s_{i+1}\cdots s_{i+k} s_{i-j} s_{i-j+1} \cdots s_{i-1}.$$ To conclude the proof, we will show that $a_{x,w'}(q)=(1-q)^j$. Observe that $[x,w'] \simeq B_{\ell(w')-\ell(x)}$ and so $R_{x,w'}(q)=(q-1)^{\ell(w')-\ell(x)}$ (see \cite[Corollary 4.10]{bre-cpkl}). On the other hand, Proposition \ref{c-apol} implies  $a_{x,w'}(q)=\varepsilon_x \varepsilon_{w'} R_{x,w'}(q)$, since $\{y \in [x,w']:\,y \not \in W_c(X)\}=\emptyset$. Therefore $a_{x,w'}(q)=\varepsilon_x \varepsilon_{w'}(q-1)^{\ell(w')-\ell(x)}=(1-q)^j$, as desired.  
\qed
\end{proof}

Next, we obtain a property for the polynomials $\{a_{x,w}\}$ which will be used in Section \ref{sec: l}.
\begin{prop} \label{apollo}
Let $w \in W_c(X)$. Then 
$$\sum_{\substack{x \in W_c(X) \\ x \leq w}} \varepsilon_x \varepsilon_w a_{x,w}=q^{\ell(w)}.$$
\end{prop}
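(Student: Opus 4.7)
The plan is to use the formula from Proposition~\ref{c-apol} that expands $a_{x,w}$ in terms of $R$--polynomials and $D$--polynomials, then swap the order of summation and collapse the inner $D$--sum using Proposition~\ref{pro dsgn}. The final step is the classical identity $\sum_{x \leq w} R_{x,w}(q) = q^{\ell(w)}$ for the Hecke algebra.

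More precisely, multiplying the formula of Proposition~\ref{c-apol} by $\varepsilon_x \varepsilon_w$ and summing over $x \in W_c(X)$ with $x \leq w$ gives
\[
\sum_{\substack{x \in W_c(X) \\ x \leq w}} \varepsilon_x \varepsilon_w a_{x,w}
= \sum_{\substack{x \in W_c(X) \\ x \leq w}} R_{x,w}
+ \sum_{\substack{x \in W_c(X) \\ x \leq w}}\ \sum_{\substack{y \not\in W_c(X) \\ x < y < w}} \varepsilon_x \varepsilon_y R_{y,w} D_{x,y}.
\]
I would then interchange the two summations in the double sum, obtaining
\[
\sum_{\substack{y \not\in W_c(X) \\ y < w}} \varepsilon_y R_{y,w} \sum_{\substack{x \in W_c(X) \\ x < y}} \varepsilon_x D_{x,y}.
\]
Since $y \not\in W_c(X)$, Proposition~\ref{pro dsgn} says that the inner sum equals $\varepsilon_y$; the two $\varepsilon_y$'s then cancel, so that the double sum simplifies to $\sum_{y \not\in W_c(X),\, y < w} R_{y,w}$.

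Combining this with the first sum reassembles an unrestricted sum over the Bruhat interval:
\[
\sum_{\substack{x \in W_c(X) \\ x \leq w}} \varepsilon_x \varepsilon_w a_{x,w}
= \sum_{x \leq w} R_{x,w}(q).
\]
The conclusion then follows from the well known identity $\sum_{x \leq w} R_{x,w}(q) = q^{\ell(w)}$, which can be obtained by applying the trivial character $\epsilon_{+}\colon T_s \mapsto q$ to both sides of the defining relation of the $R$--polynomials (Theorem~\ref{r-pol})... wait, actually it is the sign character $\epsilon_{-}\colon T_s \mapsto -1$ that one applies: since $\epsilon_{-}(T_{w^{-1}})^{-1} = \varepsilon_w$, equating with $\varepsilon_w q^{-\ell(w)} \sum_{x \leq w} R_{x,w}(q)$ gives precisely $\sum_{x \leq w} R_{x,w}(q) = q^{\ell(w)}$. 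I do not expect a serious obstacle here; the main bookkeeping step is the interchange of summations and the verification that the condition $x < y$ (rather than $x \leq y$) in the inner sum causes no loss, which is immediate because $y \not\in W_c(X)$ forces $x \neq y$ for $x \in W_c(X)$.
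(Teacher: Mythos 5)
Your proof is correct and follows essentially the same route as the paper's: expand $a_{x,w}$ via Proposition~\ref{c-apol}, interchange the two summations, collapse the inner sum to $\varepsilon_y$ using Proposition~\ref{pro dsgn}, and reassemble $\sum_{x\leq w}R_{x,w}=q^{\ell(w)}$. The only difference is cosmetic: the paper dismisses that last identity as a routine exercise, while you justify it with the sign character, which is a valid argument.
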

\begin{proof}
First, it is a routine exercise to prove the following property: 
\begin{equation} \label{req}
\sum_{x \leq w} R_{x,w}=q^{\ell(w)},
\end{equation}
for every $w \in W(X)$.\\
By combining (\ref{apolform}) with Proposition \ref{pro dsgn} we get 
\begin{eqnarray}
\sum_{\substack{x \in W_c(X) \\ x \leq w}} \varepsilon_x \varepsilon_w a_{x,w}  
      &=& \sum_{\substack{x \in W_c(X) \\ x \leq w}} \varepsilon_x \varepsilon_w \left( \varepsilon_x \varepsilon_w R_{x,w}+\sum_{\substack{y \not \in W_c(X) \\ x< y <w}} \varepsilon_y \varepsilon_w R_{y,w} D_{x,y}  \right)   \nonumber \\
      &=& \sum_{\substack{x \in W_c(X) \\ x \leq w}} R_{x,w}  + \sum_{\substack{x \in W_c(X) \\ x \leq w}} \varepsilon_x \left( \sum_{\substack{y \not \in W_c(X) \\ x< y <w}} \varepsilon_y  R_{y,w} D_{x,y} \right)   \nonumber \\
      &=& \sum_{\substack{x \in W_c(X) \\ x \leq w}} R_{x,w}  + \sum_{\substack{y \not \in W_c(X) \\ y \leq w}} \varepsilon_y R_{y,w} \left( \sum_{\substack{x \in W_c(X) \\ x \leq y}} \varepsilon_x  D_{x,y}  \right)   \nonumber \\
      &=& \sum_{\substack{x \in W_c(X) \\ x \leq w}} R_{x,w}  + \sum_{\substack{y \not \in W_c(X) \\ y \leq w}} \varepsilon_y R_{y,w} \varepsilon_y   \nonumber \\
      &=& \sum_{x \leq w} R_{x,w} \nonumber
\end{eqnarray}
and the statement follows from (\ref{req}). \qed
\end{proof}

\section{Combinatorial properties of polynomials $L_{x,w}$}\label{sec: l}
In this section we study the polynomials $\{L_{x,w}(q^{-\frac{1}{2}})\}_{x,w \in W_c(X)}$, which play the same role, in $TL(X)$, as the Kazhdan--Lusztig polynomials in $\He(X)$. In particular, we derive a recursive formula for $L_{x,w}$ by means of some results in \cite{gre-gjt}. Then we obtain a recursion for $L_{x,w}$, with $x,w$ satisfying particular properties.\\ 
Throughout this section we will assume $X$ to be an arbitrary Coxeter graph satisfying (\ref{Cw-0}). We recall that $[x,w]_c$ denotes the set $\{y \in [x,w]: \, y \in W_c(X) \}$.

It is known that the terms of maximum possible degree in the polynomials $L_{x,w}$ and in the Kazhdan--Lusztig polynomials coincide (see \cite[Theorem 5.13]{gre-gjt}) . 
\begin{prop}\label{muu}
For $x,w \in W_c(X)$ let $M(x,w)$ be the coefficient of $q^{-\frac{1}{2}}$ in $L_{x,w}$ and let $\mu(x,w)$ be the coefficient of $q^{\frac{\ell(w)-\ell(x)-1}{2}}$ in $P_{x,w}$. Then $M(x,w) = \mu(x,w)$. 
\end{prop}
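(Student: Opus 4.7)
The plan is to exploit the identity $\sigma(C_w')=c_w$ for $w\in W_c(X)$, which is guaranteed by our standing assumption on $X$ (Theorem \ref{teocw0}). Applying $\sigma$ to (\ref{C'w}) and expanding each $t_x=\sigma(T_x)$ via Proposition \ref{d-pol}, we get
\[
c_w=q^{-\frac{\ell(w)}{2}}\sum_{\substack{y\in W_c(X)\\ y\le w}}\left(P_{y,w}(q)+\sum_{\substack{x\notin W_c(X)\\ y<x\le w}}P_{x,w}(q)\,D_{y,x}(q)\right)t_y,
\]
using $D_{y,x}=\delta_{y,x}$ when both indices lie in $W_c(X)$. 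Comparing this with the expansion of $c_w$ in Theorem \ref{l-pol}(ii) and writing $n\ug\ell(w)-\ell(y)$, I obtain the identity
\[
L_{y,w}(q^{-\frac{1}{2}})=q^{-\frac{n}{2}}P_{y,w}(q)+q^{-\frac{n}{2}}\sum_{\substack{x\notin W_c(X)\\ y<x\le w}}P_{x,w}(q)\,D_{y,x}(q).
\]

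The coefficient of $q^{-\frac{1}{2}}$ on the left is $M(y,w)$. In the first term on the right it is the coefficient of $q^{\frac{n-1}{2}}$ in $P_{y,w}(q)$, which is exactly $\mu(y,w)$ by Theorem \ref{kl-pol}. To finish, the contribution of each term in the second sum must vanish at the level $q^{-\frac{1}{2}}$, i.e.\ the coefficient of $q^{\frac{n-1}{2}}$ in $P_{x,w}(q)D_{y,x}(q)$ must be zero for every $x\notin W_c(X)$ with $y<x\le w$.

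The key input is therefore a degree bound on the $D$--polynomials, namely
\[
\deg D_{y,x}\le\frac{\ell(x)-\ell(y)-1}{2}\qquad\text{for }y\in W_c(X),\ x\notin W_c(X).
\]
I would prove this by induction on $\ell(x)-\ell(y)$ using the recursion (\ref{lemaineq}): the base case $\ell(x)-\ell(y)=1$ reduces to $D_{y,x}=-P_{y,x}$ (a constant), while the inductive step bounds $\deg(D_{y,z}P_{z,x})$ by $(\ell(z)-\ell(y)-1)/2+(\ell(x)-\ell(z)-1)/2<(\ell(x)-\ell(y)-1)/2$, combining with the standard bound on $P_{y,x}$. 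Given this bound, each product $P_{x,w}(q)D_{y,x}(q)$ in the second sum has degree at most $(n-2)/2<(n-1)/2$, so its coefficient at $q^{\frac{n-1}{2}}$ is $0$, and $M(y,w)=\mu(y,w)$ follows.

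The main obstacle is locating and cleanly proving the degree bound on $D_{y,x}$; once it is in place, the identification of coefficients is automatic. An alternative avenue, should the bound prove awkward, is to invoke directly \cite[Theorem 5.13]{gre-gjt}, which matches leading coefficients of $L_{x,w}$ and $P_{x,w}$ in the Jones--Temperley--Lieb setting and transfers to our framework via (\ref{Cw-0}).
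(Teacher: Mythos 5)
Your argument is correct, but it is genuinely different from what the paper does: the paper gives no proof of Proposition \ref{muu} at all, simply citing \cite[Theorem 5.13]{gre-gjt} (the fallback you mention at the end of your sketch). Your route is a self-contained derivation: the displayed identity you obtain by applying $\sigma$ to (\ref{C'w}) is exactly the paper's Theorem \ref{lpoch} (equation (\ref{e-pol}), itself quoted from \cite{pes-cptl}), and the only new ingredient you need is the degree bound $\deg D_{y,x}\le\frac{1}{2}(\ell(x)-\ell(y)-1)$. That bound does follow by the induction you describe from (\ref{lemaineq}) together with the Kazhdan--Lusztig bound $\deg P_{y,x}\le\frac{1}{2}(\ell(x)-\ell(y)-1)$ for $y<x$: each term $D_{y,t}P_{t,x}$ has degree at most $\frac{1}{2}(\ell(x)-\ell(y)-2)$, so the bound is inherited from the $-P_{y,x}$ term, and consequently each product $D_{y,x}P_{x,w}$ in the correction sum has degree at most $\frac{1}{2}(n-2)<\frac{1}{2}(n-1)$ and cannot contribute to the coefficient of $q^{-\frac{1}{2}}$. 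Note that (\ref{lemaineq}) and (\ref{e-pol}) are only available under the standing hypothesis that $X$ satisfies (\ref{Cw-0}), so your proof is confined to that setting --- but so is the whole of Section \ref{sec: l}, so nothing is lost. What your approach buys is independence from Green's trace machinery and an explicit degree estimate for the $D$--polynomials that is of interest in its own right; what the citation buys is brevity and, in Green's formulation, potentially greater generality.
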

The product of two IC basis elements can be computed by means of the following formula (see \cite[Theorem 5.13]{gre-gjt}).
\begin{prop}\label{icprod}
Let $s \in S(X)$ and $w \in W_c(X)$. Then 
\[
c_sc_w= 
\begin{cases}
c_{sw}+\sum_{\substack{x \prec w  \\ sx<x}}\mu(x,w)c_x    & \mbox{if } \ell(sw)>\ell(w);  \\
(q^{\frac{1}{2}}+q^{-\frac{1}{2}})c_w                     & \mbox{otherwise},
\end{cases}   \nonumber
\]
where $c_x \ug 0$ for every $x \not \in W_c(X)$.
\end{prop}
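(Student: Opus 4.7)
The plan is to derive the identity by projecting the analogous product formula for the Kazhdan--Lusztig basis of $\He(X)$ through the canonical map $\sigma:\He(X)\to TL(X)$, exploiting the standing assumption that $X$ satisfies (\ref{Cw-0}).

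First, I would recall the classical multiplication rule for the Kazhdan--Lusztig basis (see \cite{kl-rcg}, or \cite[Ch.~7]{Hum}): for $s\in S(X)$ and $w\in W(X)$,
$$C'_sC'_w=\begin{cases}
C'_{sw}+\sum\limits_{\substack{x<w\\ sx<x}}\mu(x,w)\,C'_x & \mbox{if } \ell(sw)>\ell(w),\\
(q^{\frac{1}{2}}+q^{-\frac{1}{2}})\,C'_w & \mbox{if } \ell(sw)<\ell(w),
\end{cases}$$
where the sum may implicitly be restricted to those $x$ with $\mu(x,w)\neq 0$ (equivalently, $x\prec w$).

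Next, I would apply $\sigma$ to both sides. Since $\sigma$ is an $\A$--algebra homomorphism, since $s\in S(X)\subseteq W_c(X)$, and since $w\in W_c(X)$ by hypothesis, the left--hand side becomes $c_sc_w$. For the right--hand side, the running assumption on $X$, together with Theorem \ref{teocw0} (the content of (\ref{Cw-0})), gives $\sigma(C'_y)=c_y$ when $y\in W_c(X)$ and $\sigma(C'_y)=0$ when $y\notin W_c(X)$. Combined with the convention $c_y\ug 0$ for $y\notin W_c(X)$, this yields exactly the statement: in the descent case $\ell(sw)<\ell(w)$ the identity is immediate, while in the ascent case $\ell(sw)>\ell(w)$ the projection of $C'_{sw}$ produces the term $c_{sw}$ (which is zero when $sw\notin W_c(X)$), and each summand indexed by some $x\notin W_c(X)$ is silently annihilated on the right.

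The only bookkeeping worth making explicit is that the index set $\{x\prec w:\ sx<x\}$ appearing in the statement matches the surviving terms after projection: since $x\prec w$ just flags $\mu(x,w)\neq 0$, and since no fully commutative $x$ is killed by $\sigma$, the identification is transparent. I do not expect a substantive obstacle here, as the result is essentially the Hecke--algebra multiplication law read off in the quotient $TL(X)$; the whole force of the argument is absorbed into Theorem \ref{teocw0}, and the role of Proposition \ref{muu} is merely to justify writing $\mu(x,w)$ (rather than $M(x,w)$) in the coefficients on the right--hand side.
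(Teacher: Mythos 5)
Your argument is correct. One should note, however, that the paper does not actually prove this proposition: it is quoted from \cite[Theorem 5.13]{gre-gjt}, so there is no internal proof to compare against. Your route --- apply the canonical projection $\sigma$ to the classical multiplication rule $C_s'C_w'=C_{sw}'+\sum_{x\prec w,\,sx<x}\mu(x,w)C_x'$ (resp.\ $(q^{\frac{1}{2}}+q^{-\frac{1}{2}})C_w'$) and invoke Theorem \ref{teocw0} to evaluate $\sigma(C_y')$ as $c_y$ or $0$ --- is sound under the section's standing hypothesis that $X$ satisfies (\ref{Cw-0}), and every step (that $\sigma$ is an $\A$--algebra map, that $c_s=\sigma(C_s')$, that the terms with $x\notin W_c(X)$ and possibly $C_{sw}'$ itself are annihilated, matching the convention $c_x\ug 0$) checks out. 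The one place where your framing differs in substance from the source is the role of Proposition \ref{muu}: Green's theorem is stated intrinsically in $TL(X)$, with the coefficients being the leading coefficients $M(x,w)$ of the $L$--polynomials, and the identity $M(x,w)=\mu(x,w)$ is part of his conclusion; in your derivation the coefficients $\mu(x,w)$ fall out of the Hecke algebra formula for free, so Proposition \ref{muu} is not an input at all --- indeed, comparing your formula with the intrinsic one would reprove it. The trade-off is that your argument is tied to the hypothesis (\ref{Cw-0}), which is exactly the regime of this section, so nothing is lost here.
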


\begin{cor} \label{txc} 
Let $s \in S(X)$ and $w \in W_c(X)$. Then 
\[
t_sc_w= 
\begin{cases}
-c_w+q^{\frac{1}{2}}\left(c_{sw} + \sum_{\substack{x \prec w  \\ sx<x}} \mu(x,w) c_x \right)    & \mbox{if } \ell(sw)>\ell(w);  \\
qc_w                                                                                            & \mbox{otherwise}.
\end{cases}   \nonumber
\]
\end{cor}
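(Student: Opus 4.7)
The plan is to reduce the corollary to Proposition \ref{icprod} by expressing $t_s$ as a linear combination of $c_s$ and $c_e$. First I would compute $c_s$ explicitly from Theorem \ref{l-pol}. Since $[e,s]_c=\{e,s\}$ and $L_{s,s}(q^{-\frac{1}{2}})=1$, we get
\[
c_s = q^{-\frac{\ell(s)}{2}} t_s + L_{e,s}(q^{-\frac{1}{2}}) t_e.
\]
To identify $L_{e,s}(q^{-\frac{1}{2}})$, I would invoke Proposition \ref{muu}: the coefficient of $q^{-\frac{1}{2}}$ in $L_{e,s}$ equals $\mu(e,s)$, which is $1$ since $P_{e,s}(q)=1$ and $(\ell(s)-\ell(e)-1)/2=0$. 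As $L_{e,s}(q^{-\frac{1}{2}}) \in q^{-\frac{1}{2}}\mathbb{Z}[q^{-\frac{1}{2}}]$ and must have degree strictly less than $\ell(s)/2 = 1/2$ as a polynomial in $q^{-1/2}$ beyond the leading entry (this forces $L_{e,s}(q^{-\frac{1}{2}})=q^{-\frac{1}{2}}$). Thus $c_s = q^{-\frac{1}{2}}(t_s+t_e)$, and since $c_e = t_e$ (again by Theorem \ref{l-pol}), we obtain
\[
t_s = q^{\frac{1}{2}} c_s - c_e.
\]

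Next I would multiply by $c_w$ on the right to get $t_s c_w = q^{\frac{1}{2}} c_s c_w - c_w$, and then apply Proposition \ref{icprod} in the two cases. If $\ell(sw)>\ell(w)$, substituting the first clause of Proposition \ref{icprod} immediately yields
\[
t_s c_w = q^{\frac{1}{2}}\Bigl(c_{sw} + \sum_{\substack{x \prec w \\ sx<x}} \mu(x,w) c_x\Bigr) - c_w,
\]
which is exactly the first case of the corollary. If instead $\ell(sw)<\ell(w)$, the second clause of Proposition \ref{icprod} gives $c_s c_w = (q^{\frac{1}{2}}+q^{-\frac{1}{2}})c_w$, so
\[
t_s c_w = q^{\frac{1}{2}}(q^{\frac{1}{2}}+q^{-\frac{1}{2}}) c_w - c_w = (q+1)c_w - c_w = q c_w,
\]
which matches the second case.

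The only mildly delicate step is pinning down $L_{e,s}(q^{-\frac{1}{2}})=q^{-\frac{1}{2}}$, and this is the part I expect to require a moment of care; everything else is a direct substitution. Once that identification is in hand, the corollary follows from Proposition \ref{icprod} with no further computation.
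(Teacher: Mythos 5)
Your argument is exactly the paper's proof: the identity $t_s=q^{\frac{1}{2}}c_s-c_e$ followed by substitution into Proposition \ref{icprod} is all the paper does. The only imprecise step is your justification that $L_{e,s}(q^{-\frac{1}{2}})=q^{-\frac{1}{2}}$ --- Theorem \ref{l-pol} gives no upper degree bound in $q^{-\frac{1}{2}}$, so the ``degree strictly less than $1/2$'' claim is unsupported; the clean fix is to check directly that $q^{-\frac{1}{2}}(t_s+t_e)$ is $\iota$--invariant (using $t_s^{-1}=q^{-1}(t_s-(q-1)t_e)$) and invoke the uniqueness in Theorem \ref{l-pol}, which the paper itself leaves implicit.
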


\begin{proof}
Observe that $t_s=q^{\frac{1}{2}}c_s-c_e$. So $t_sc_w=q^{\frac{1}{2}}c_sc_w-c_w$ and the statement follows by applying Proposition \ref{icprod}.\qed
\end{proof}

\begin{teo}
Let $x,w \in W_c(X)$ be such that $sx \in W_c(X)$ and $sw<w$. Then
\begin{eqnarray}
L_{x,w}(q^{-\frac{1}{2}}) & = & L_{sx,sw}(q^{-\frac{1}{2}})+q^{c-\frac{1}{2}}L_{x,sw}(q^{-\frac{1}{2}})-\sum_{\substack{sz<z \\ z \in [sx,sw]_c}} \mu(z,sw)L_{x,z}(q^{-\frac{1}{2}}) \nonumber \\
                          &   & +\: q^{-\frac{1}{2}}\sum_{\substack{sz \not \in W_c(X) \\z \in [x,w]_c}} q^{\frac{\ell(x)-\ell(z)}{2}}D_{x,sz}(q)L_{z,sw}(q^{-\frac{1}{2}}), \nonumber
\end{eqnarray}
where $c=1$ if $sx<x$ and $0$ otherwise. 
\end{teo}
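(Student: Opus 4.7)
My plan is to derive the recursion by writing $c_w$ as a linear combination of $t_s c_{sw}$, $c_{sw}$, and certain lower $c_z$'s, and then extracting the coefficient of $t_x$. Since $\ell(s \cdot sw) = \ell(w) > \ell(sw)$, Corollary \ref{txc} (applied to $sw$ in place of $w$) gives
\[
t_s c_{sw} = -c_{sw} + q^{1/2}\Bigl(c_w + \sum_{\substack{z \prec sw \\ sz < z}} \mu(z,sw)\, c_z\Bigr),
\]
which rearranges to
\[
c_w = q^{-1/2}\, t_s c_{sw} + q^{-1/2}\, c_{sw} - \sum_{\substack{z \prec sw \\ sz < z}} \mu(z,sw)\, c_z.
\]
I would then invoke Theorem \ref{l-pol} to expand every $c$ in the $t$-basis and equate the coefficient of $t_x$ on the two sides.

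The coefficient of $t_x$ in the LHS is $q^{-\ell(x)/2} L_{x,w}(q^{-1/2})$. The ``easy'' pieces $q^{-1/2} c_{sw}$ and $-\sum \mu(z,sw)\, c_z$ contribute, respectively, $q^{-\ell(x)/2-1/2} L_{x,sw}(q^{-1/2})$ and $-\sum q^{-\ell(x)/2} \mu(z,sw) L_{x,z}(q^{-1/2})$. The heart of the argument is the coefficient of $t_x$ in $t_s c_{sw}$: expanding $c_{sw} = \sum_{y \in W_c,\,y \leq sw} q^{-\ell(y)/2} L_{y,sw}(q^{-1/2}) t_y$, I would compute each $t_s t_y$ by lifting to $\He(X)$ (using the left-multiplication analogue of (\ref{prod})) and projecting back by $\sigma$. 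This produces three cases: (a) if $sy > y$ and $sy \in W_c$, then $t_s t_y = t_{sy}$; (b) if $sy > y$ and $sy \not\in W_c$, then $t_s t_y = \sigma(T_{sy}) = \sum_{u \in W_c,\,u \leq sy} D_{u,sy}(q)\, t_u$ by Proposition \ref{d-pol}; and (c) if $sy < y$ (forcing $sy \in W_c$ by the subword characterization of $W_c$), then $t_s t_y = q\, t_{sy} + (q-1)\, t_y$.

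Extracting the coefficient of $t_x$ from these three cases produces, in order: the $L_{sx,sw}$ term (from (a) when $y = sx$ and $c = 1$); the $D$-polynomial sum (from (b), reindexed by setting $z := y$); and two contributions from (c), namely $q L_{sx,sw}$ (when $y = sx$ and $c = 0$) and $(q-1)L_{x,sw}$ (when $y = x$ and $c = 1$). The main obstacle I expect is the unified case analysis: one must check that the $L_{sx,sw}$ contributions from (a) and from the $q\, t_{sy}$-part of (c) both simplify to $q^{(1-\ell(x))/2} L_{sx,sw}$, independently of whether $c = 0$ or $c = 1$, so they assemble into a single unconditional $L_{sx,sw}$ after dividing by $q^{-\ell(x)/2}$; and that the $(q-1)L_{x,sw}$ contribution from (c) (which appears only in the $c=1$ case) combines with the $q^{-1/2} c_{sw}$ contribution to produce the coefficient $q^{c-1/2}$ of $L_{x,sw}$ uniformly in $c$. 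A secondary bookkeeping task is to translate the natural index set of the case-(b) sum, namely $\{z \in W_c : z \leq sw,\ sz \notin W_c\}$, into the stated set $\{z \in [x,w]_c : sz \notin W_c\}$, using that $sz \not\in W_c$ and $z \in W_c$ forces $sz > z$, together with the vanishing of $L_{z,sw}$ for $z \not\leq sw$ and standard lifting/subword properties of Bruhat order to compare the conditions $x \leq z$ and $x \leq sz$.
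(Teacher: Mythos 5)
Your proposal is correct and follows essentially the same route as the paper: the identity $c_w = q^{-1/2}t_sc_{sw} + q^{-1/2}c_{sw} - \sum\mu(z,sw)c_z$ you obtain from Corollary \ref{txc} is exactly the paper's starting point $c_w = c_sc_{sw} - \sum\mu(z,sw)c_z$ with $c_s = q^{-1/2}(t_s+t_e)$, and the subsequent three-case expansion of $t_st_y$ (using Proposition \ref{d-pol} when $sy \notin W_c$) and extraction of the coefficient of $t_x$ matches the paper's computation, including the unification of the $c=0$ and $c=1$ cases.
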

\begin{proof}
Let $w=sv$. By Proposition \ref{icprod}, we have that
\begin{equation} \label{equazio}
c_w=c_{sv}=c_sc_v-\sum_{sz<z} \mu(z,sw)c_z.
\end{equation}
Recall that $c_s=q^{-\frac{1}{2}}(t_s+t_e)$. Hence
\begin{eqnarray}
c_sc_v & = & q^{-\frac{1}{2}} c_v + q^{-\frac{1}{2}} t_s c_v  \nonumber \\
       & = & q^{-\frac{1}{2}} c_v + \sum_{\substack{x \in W_c(X) \\ x \leq sw}}q^{-\frac{\ell(x)}{2}}L_{x,sw} t_s t_x  \nonumber \\
       & = & q^{-\frac{1}{2}} \left(c_v + \sum_{\substack{sx \in W_c(X) \\ x<sx}} q^{-\frac{\ell(x)}{2}}L_{x,sw} t_{sx}+\sum_{sx<x} q^{-\frac{\ell(x)}{2}}L_{x,sw} (qt_{sx}+(q-1)t_x) \right) \nonumber\\
       &   & +\: q^{-\frac{1}{2}}\left( \sum_{\substack{sx \not \in W_c(X) \\ x<sx}} q^{-\frac{\ell(x)}{2}}L_{x,sw} \left(\sum_{\substack{y \in W_c \\ y<sx}} D_{y,sx} t_y  \right)    \right) \nonumber \\
       & = & q^{-\frac{1}{2}} \left(c_v + \sum_{\substack{sx \in W_c(X) \\ x<sx}} q^{-\frac{\ell(x)}{2}}L_{x,sw} t_{sx}+\sum_{sx<x} q^{-\frac{\ell(x)}{2}}L_{x,sw} (qt_{sx}+(q-1)t_x)\right) \nonumber\\
       &   & +\: q^{-\frac{1}{2}} \left( \sum_{\substack{y \in W_c(X) \\ y \leq w}} \left(\sum_{\substack{sx \not \in W_c(X) \\ x<sx}} q^{-\frac{\ell(x)}{2}}D_{y,sx} L_{x,sw} \right) t_y    \right). \nonumber     
\end{eqnarray} 
Suppose that $su>u$ and extract the coefficient of $t_{su}$ on both sides of (\ref{equazio}). It follows that 
$$L_{su,w}=L_{u,sw}+q^{\frac{1}{2}}L_{su,sw}+\sum_{\substack{sz \not \in W_c(X) \\ z<sz}} q^{\frac{\ell(u)-\ell(z)}{2}}D_{su,sz}L_{z,sw}-\sum_{\substack{z \in [u,w]_c \\ sz<z}}\mu(z,sw) L_{su,z}.$$
Otherwise, if $su<u$ then
$$L_{su,w}=L_{u,sw}+q^{-\frac{1}{2}}L_{su,sw}+q^{-1}\sum_{\substack{sz \not \in W_c(X) \\ z<sz}} q^{\frac{\ell(u)-\ell(z)}{2}}D_{su,sz}L_{z,sw}-\sum_{\substack{z \in [u,w]_c \\ sz<z}}\mu(z,sw) L_{su,z}.$$
The statement follows by applying the substitution $x=su$.\qed
\end{proof}

In \cite[Theorem 5.1]{pes-cptl} the following result is proved.
\begin{teo}\label{lpoch}
Let $X$ be such that equation (\ref{Cw-0}) holds. For all elements $x,w \in W_c(X)$ such that $x < w$ we have
\begin{equation} \label{e-pol}
L_{x,w}=q^{\frac{\ell(x)-\ell(w)}{2}}\left(P_{x,w}+\sum_{\substack{y \not \in W_c(X) \\ x < y < w}} D_{x,y}P_{y,w}\right). 
\end{equation}
\end{teo}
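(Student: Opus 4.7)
The plan is to compute $\sigma(C_w')$ in two different ways and compare. Since the hypothesis on $X$ is that (\ref{Cw-0}) holds, and since $w \in W_c(X)$ (we may assume this because both sides of the desired identity involve $L_{x,w}$ with $w \in W_c(X)$), we have $\sigma(C_w') = c_w$. On the one hand, substituting the definition of $c_w$ from Theorem \ref{l-pol} gives
\[
\sigma(C_w') \;=\; \sum_{\substack{x \in W_c(X) \\ x \leq w}} q^{-\frac{\ell(x)}{2}}\,L_{x,w}(q^{-\frac{1}{2}})\,t_x.
\]
On the other hand, starting from formula (\ref{C'w}) and applying $\sigma$, which sends $T_y$ to $t_y$, one obtains
\[
\sigma(C_w') \;=\; q^{-\frac{\ell(w)}{2}} \sum_{y \leq w} P_{y,w}(q)\,t_y.
\]

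Next I would split this sum according to whether $y \in W_c(X)$ or not. For the $W_c(X)$--terms, $t_y$ is already a basis element. For the terms with $y \notin W_c(X)$, I would apply Proposition \ref{d-pol} to write $t_y = \sum_{x \in W_c(X),\, x \leq y} D_{x,y}(q)\, t_x$, and then swap the order of summation so that the outer index is $x \in W_c(X)$ with $x \leq w$. Using the fact (noted just after Lemma \ref{lemain}) that $D_{x,y}=\delta_{x,y}$ whenever both $x,y \in W_c(X)$, the coefficient of $t_x$ in $\sigma(C_w')$ becomes
\[
q^{-\frac{\ell(w)}{2}}\left(P_{x,w}(q) + \sum_{\substack{y \notin W_c(X) \\ x < y < w}} D_{x,y}(q)\,P_{y,w}(q)\right).
\]

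Finally, matching this with the coefficient $q^{-\ell(x)/2}L_{x,w}(q^{-1/2})$ coming from the first expression of $\sigma(C_w')$, and rearranging powers of $q$, yields exactly formula (\ref{e-pol}). The restriction $x < w$ in the statement simply excludes the trivial case $x=w$, for which the sum on the right is empty, $P_{x,w}=1$, and $L_{w,w}(q^{-1/2})=1$, so the identity holds automatically.

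I do not expect any serious obstacle: the argument is essentially a bookkeeping computation enabled by hypothesis (\ref{Cw-0}). The only care required is in the interchange of summations and in ensuring that the $W_c(X)$ versus non-$W_c(X)$ split of the index set is handled correctly, together with the fact that $D_{x,y}$ vanishes outside the Bruhat interval $[x,y]$ so that no spurious terms appear when relabeling.
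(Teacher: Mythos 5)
Your argument is correct, and it is the natural one: the paper itself does not prove Theorem \ref{lpoch} but cites it from \cite{pes-cptl}, where the proof proceeds exactly as you describe — applying $\sigma$ to (\ref{C'w}), using hypothesis (\ref{Cw-0}) to identify $\sigma(C_w')$ with $c_w$, expanding the non-fully-commutative $t_y$ via Proposition \ref{d-pol}, and equating coefficients of $t_x$ in the $t$-basis. No gaps.
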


\begin{lem} \label{lzw0}
Let $x,w \in W_c(X)$. If there exists $s \in S(X)$ such that $sw<w$ and $x<sx \not \in W_c(X)$, then $L_{x,w}=0$.
\end{lem}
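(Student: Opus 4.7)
My plan is to apply Theorem \ref{lpoch}, which reduces the claim to showing
\[
P_{x,w}+\sum_{\substack{y\not\in W_c(X)\\ x<y<w}}D_{x,y}P_{y,w}=0.
\]
The opening move is to invoke the classical identity $P_{x,w}=P_{sx,w}$, valid whenever $sw<w$ and $sx>x$. This is the left-sided analogue of the identity already used in the proof of Lemma \ref{ddelta} and follows from it via the anti-automorphism $w\mapsto w^{-1}$. Since $sx\not\in W_c(X)$ while $w\in W_c(X)$, the lifting property gives $sx<w$, so $sx$ appears in the sum with coefficient $D_{x,sx}=-1$ (read directly from (\ref{lemaineq})). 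Hence the $y=sx$ summand cancels the initial $P_{x,w}$, and it suffices to show that
\[
\Sigma\ug\sum D_{x,y}P_{y,w}
\]
vanishes, where the sum runs over $y\not\in W_c(X)$ with $y\neq sx$ and $x<y<w$.

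Next I would split $\Sigma$ according to whether $sy<y$ or $sy>y$. In the first part, if $sy\in W_c(X)$ then the left-sided version of Lemma \ref{ddelta} (obtained from that lemma by the same anti-automorphism) yields $D_{x,y}=-\delta_{x,sy}$, forcing $y=sx$, which is already excluded; so every surviving $y$ with $sy<y$ has $sy\not\in W_c(X)$. In the second part, $y\not\in W_c(X)$ together with $sy>y$ forces $sy\not\in W_c(X)$ automatically, since fully commutative elements are closed under Bruhat descent. Now pair each $y$ in the $sy>y$ part with $sy$: lifting gives $sy\le w$, while $sy=w$ would force $y=sw\in W_c(X)$, a contradiction, so $sy<w$ and $sy$ lies in the first (surviving) part. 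Conversely, if $y'$ lies in the surviving first part, the lifting property applied with $sx>x$ places $sy'$ in the second part. Using $P_{y,w}=P_{sy,w}$, the paired contribution equals $P_{y,w}(D_{x,y}+D_{x,sy})$, and the identity $D_{x,sy}=-D_{x,y}$ -- the left-sided analogue of equation (\ref{dxzs0}), proved by the same induction but with Proposition \ref{p-gc} and Lemma \ref{ddelta} replaced by their left-sided versions -- makes every pair cancel. Hence $\Sigma=0$.

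The main obstacle is the combinatorial bookkeeping: one must verify that the pairing $y\leftrightarrow sy$ is complete and disjoint on all of $\Sigma$, and handle the edge cases $sy=w$, $y=x$, and $y=sx$ correctly. All algebraic identities needed (the left-sided analogues of Lemma \ref{ddelta}, Proposition \ref{p-gc}, and equation (\ref{dxzs0})) are direct consequences of their right-sided counterparts in the paper via the anti-automorphism $w\mapsto w^{-1}$, so no substantially new algebra is required.
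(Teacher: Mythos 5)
Your argument is correct and follows essentially the same route as the paper's own proof: reduce via Theorem \ref{lpoch}, cancel $P_{x,w}$ against the $y=sx$ term using $D_{x,sx}=-1$ and $P_{x,w}=P_{sx,w}$, kill the terms with $y>sy\in W_c(X)$ via (the left--sided) Lemma \ref{ddelta}, and cancel the remaining terms in pairs $y\leftrightarrow sy$ via the left--sided analogue of (\ref{dxzs0}); your explicit justification of the left--sided lemmas through the anti--automorphism $w\mapsto w^{-1}$ is a point the paper leaves implicit. One small wording correction: the closure fact you invoke ($sy\in W_c(X)$ and $y<sy$ imply $y\in W_c(X)$) is closure of $W_c(X)$ under the \emph{weak} order (removing a single letter), not under general Bruhat descent --- $W_c(X)$ is not a Bruhat order ideal --- but the instance you use is of the weak--order kind, so the step is valid.
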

\begin{proof}
By (\ref{e-pol}) we get 
\begin{eqnarray}
L_{x,w} &=& q^{\frac{\ell(x)-\ell(w)}{2}}\left(P_{x,w}+\sum_{\substack{y \not \in W_c(X) \\ x < y < w}} D_{x,y}P_{y,w}\right) \nonumber \\
        &=& q^{\frac{\ell(x)-\ell(w)}{2}}\left(P_{x,w}+D_{x,sx}P_{sx,w}+\sum_{\substack{y \not \in W_c(X), y \neq sx \\ x < y < w}} D_{x,y}P_{y,w}\right)\nonumber \\
        &=& q^{\frac{\ell(x)-\ell(w)}{2}} \left(\sum_{\substack{y \not \in W_c(X), y \neq sx \\ x < y < w}} D_{x,y}P_{y,w}\right). \label{star}
\end{eqnarray}
Denote by $(*)$ the expression in round brackets in (\ref{star}). Then $(*)$ is zero and the statement follows. In fact, by applying  relation (\ref{dxzs0}) and Lemma \ref{ddelta}, we get
\begin{eqnarray}
    (*) &=& \sum_{\substack{y \not \in W_c(X) \\ y < sy }} D_{x,y}P_{y,w}+\sum_{\substack{y \not \in W_c(X), y \neq sx \\ y>sy}} D_{x,y}P_{y,w}\nonumber \\
        &=& \sum_{\substack{y \not \in W_c(X) \\ y < sy \not \in W_c(X)}} D_{x,y}P_{y,w} + \sum_{\substack{y \not \in W_c(X) \\ y>sy \not \in W_c(X)}} D_{x,y}P_{y,w} + \sum_{\substack{y \not \in W_c(X), y \neq sx \\ y>sy \in W_c(X)}} D_{x,y}P_{y,w}\nonumber \\
        &=& \sum_{\substack{y \not \in W_c(X) \\ y>sy \not \in W_c(X)}} (\underbrace{D_{x,sy}+D_{x,y}}_{0})P_{y,w}+\sum_{\substack{y \not \in W_c(X), y \neq sx \\ y>sy \in W_c(X)}} D_{x,y}P_{y,w} \nonumber \\
        &=& \sum_{\substack{y \not \in W_c(X), y \neq sx \\ y>sy \in W_c(X)}} (-\delta_{x,sy})P_{y,w}=0, \nonumber    
\end{eqnarray}
as desired. \qed
\end{proof}

The next result is the anlogue of a well--known property of the Kazhdan--Lusztig polynomials (see, \textrm{e.g.}, \cite[Proposition 5.1.8]{bb-ccg}).
\begin{teo} \label{lrecurs}
Let $x,w \in W_c(X)$ be such that $x<w$. If there exists $s \in S(X)$ such that $sw<w$ and $x<sx \in W_c(X)$, then 
$$L_{x,w}=q^{-\frac{1}{2}}L_{sx,w}.$$  
\end{teo}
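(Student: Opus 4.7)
My plan is to derive the identity from the product formula $t_s c_w = q c_w$, which is the second case of Corollary \ref{txc} since $sw<w$. Expanding $c_w$ in the $t$--basis via Theorem \ref{l-pol}, this reads $\sum_{y \in W_c(X),\, y \leq w} q^{-\ell(y)/2} L_{y,w}(q^{-1/2}) \, t_s t_y = q \sum_{y \in W_c(X),\, y \leq w} q^{-\ell(y)/2} L_{y,w}(q^{-1/2}) \, t_y$. For each $y$, the product $t_s t_y$ splits into three cases according to (\ref{t-basmolt}) and Proposition \ref{d-pol}: (i) $sy > y$ and $sy \in W_c(X)$, giving $t_s t_y = t_{sy}$; (ii) $sy > y$ and $sy \notin W_c(X)$, giving $t_s t_y = \sum_{z \in W_c(X),\, z \leq sy} D_{z,sy}(q)\, t_z$; (iii) $sy < y$, giving $t_s t_y = q t_{sy} + (q-1) t_y$.

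Next I would extract the coefficient of $t_{sx}$ on both sides. On the left, it is $q^{1-\ell(sx)/2} L_{sx,w}$. On the right there are three contributions: case (i) with $y=x$ produces $q^{-\ell(x)/2} L_{x,w}$; case (ii) with $z=sx$ produces
\[
\Sigma := \sum_{y} q^{-\ell(y)/2} L_{y,w}(q^{-1/2})\, D_{sx,sy}(q),
\]
the sum ranging over $y \in W_c(X)$ with $y \leq w$, $y < sy \notin W_c(X)$, and $sx \leq sy$; and case (iii) with $y=sx$ (which lies in $[e,w]$ by the lifting property applied to $x<w$, $sw<w$, $x<sx$) produces $(q-1) q^{-\ell(sx)/2} L_{sx,w}$. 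Collecting these terms, using $\ell(sx) = \ell(x)+1$, and multiplying through by $q^{\ell(x)/2}$ reduces the whole identity to
\[
q^{-1/2} L_{sx,w} = L_{x,w} + q^{\ell(x)/2} \Sigma.
\]

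The theorem will then follow once I show $\Sigma = 0$, and this is the conceptual heart of the argument: every index $y$ appearing in $\Sigma$ satisfies $y \in W_c(X)$, $sw < w$, and $y < sy \notin W_c(X)$, which are precisely the hypotheses of Lemma \ref{lzw0} applied to the pair $(y,w)$ with the same $s$. Hence $L_{y,w}(q^{-1/2}) = 0$ for every such $y$, the sum $\Sigma$ vanishes term by term, and we obtain $L_{x,w} = q^{-1/2} L_{sx,w}$. The only technical obstacle I expect is the careful bookkeeping in the coefficient extraction, making sure that no contribution to $t_{sx}$ is missed across the three cases and that degenerate indices (such as $y=x$ in case (i) or $y=sx$ in case (iii)) are handled correctly; once this is done, Lemma \ref{lzw0} does all the remaining work.
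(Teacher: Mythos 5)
Your proposal is correct and follows essentially the same route as the paper: expand $t_sc_w=qc_w$ in the $t$--basis via the three cases of (\ref{t-basmolt}) together with the $D$--polynomials, extract the coefficient of $t_{sx}$, and kill the residual sum over $y<sy\notin W_c(X)$ by Lemma \ref{lzw0}. The bookkeeping you outline matches the paper's computation exactly, so there is nothing to add.
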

\begin{proof}
By Corollary \ref{txc} we get $t_sc_w=qc_w$, since $\ell(sw)<\ell(w)$ by hypothesis. Furthermore, by Theorem \ref{l-pol}, if $x<sx \in W_c(X)$ then $[t_{sx}](qc_w)=q\cdot q^{-\frac{\ell(sx)}{2}}L_{sx,w}$. On the other hand, Theorem \ref{l-pol} implies that
\begin{eqnarray}
t_sc_w & = & \sum_{\substack{x \in W_c(X) \\ x \leq w}}q^{-\frac{\ell(x)}{2}}L_{x,w} t_s t_x              \nonumber \\
       & = & \sum_{\substack{sx \in W_c(X) \\ sx>x}}q^{-\frac{\ell(x)}{2}}L_{x,w} t_{sx} + \sum_{\substack{sx \not \in W_c(X) \\ sx>x}}q^{-\frac{\ell(x)}{2}}L_{x,w} t_{sx}+               \nonumber \\
       &   & +\: \sum_{\substack{x \in W_c(X) \\ sx<x}}q^{-\frac{\ell(x)}{2}}L_{x,w} (qt_{sx} + (q-1) t_x)  \nonumber \\       
       & = & \sum_{\substack{sx \in W_c(X) \\ sx>x}}q^{-\frac{\ell(x)}{2}}L_{x,w} t_{sx} + \sum_{\substack{sx \not \in W_c(X) \\ sx>x}}q^{-\frac{\ell(x)}{2}}L_{x,w} \left( \sum_{\substack{y \in W_c(X) \\ y<sx}} D_{y,sx}t_y   \right) +               \nonumber \\
       &   & +\: q\sum_{\substack{sz \in W_c(X) \\ z<sz}}q^{-\frac{\ell(sz)}{2}}L_{sz,w} t_z + (q-1)\sum_{\substack{sz \in W_c(X) \\ z<sz}}q^{-\frac{\ell(sz)}{2}}L_{sz,w} t_{sz}  \nonumber \\
       & = & \sum_{\substack{sx \in W_c(X) \\ sx>x}}q^{-\frac{\ell(x)}{2}}L_{x,w} t_{sx} +  q^{\frac{1}{2}} q^{-\frac{\ell(x)}{2}}L_{sx,w} t_x + q^{\frac{1}{2}} q^{-\frac{\ell(x)}{2}}L_{sx,w} t_{sx} +    \label{lxsa} \\       
       &   & -\: q^{-\frac{1}{2}}q^{-\frac{\ell(x)}{2}}L_{sx,w} t_{sx} + \sum_{\substack{x \in W_c(X) \\ x \leq w}} \left(   \sum_{\substack{sz \not \in W_c(X) \\ z \in (x,w)_c}} q^{-\frac{\ell(z)}{2}}D_{x,sz} L_{z,w} \right)t_x.            \label{lxsb}
\end{eqnarray} 
By extracting the coefficient of $t_{sx}$ in (\ref{lxsa}) and (\ref{lxsb}) we obtain
\begin{eqnarray}
q^{\frac{1}{2}}q^{-\frac{\ell(x)}{2}}L_{sx,w} & = & q^{-\frac{\ell(x)}{2}}L_{x,w} + q^{\frac{1}{2}}q^{-\frac{\ell(x)}{2}} L_{sx,w} - q^{-\frac{1}{2}} q^{-\frac{\ell(x)}{2}}L_{sx,w} +          \nonumber \\
                                            &   & +\: \sum_{\substack{sz \not \in W_c(X) \\ z \in (x,w)_c}} q^{-\frac{\ell(z)}{2}}D_{sx,sz} L_{z,w},       \nonumber
\end{eqnarray}
that is 
$$L_{x,w}=q^{-\frac{1}{2}}L_{sx,w}-\sum_{\substack{sz \not \in W_c(X) \\ z \in (x,w)_c}}q^{\frac{\ell(x)-\ell(z)}{2}} D_{sx,sz} L_{z,w}.$$
Observe that Lemma \ref{lzw0} implies $L_{z,w}=0$, since $z<sz \not \in W_c(X)$, and the statement follows.\qed
\end{proof}

We conclude this section with two results inspired by similar properties for the Kazhdan--Lusztig polynomials (see, \textrm{e.g.}, \cite[\S5, Exercises 16, 17]{bb-ccg}). 
\begin{prop} \label{prop-l}
Let $w \in W_c(X)$ and define $$F_w(q^{-\frac{1}{2}})\stackrel{\rm def}{=}\sum_{\substack{x \in W_c(X) \\ x \leq w}}\varepsilon_x q^{-\frac{\ell(x)}{2}}L_{x,w}(q^{-\frac{1}{2}}).$$  Then $F_w(q^{-\frac{1}{2}})=\delta_{e,w}$.
\end{prop}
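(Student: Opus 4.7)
The plan is to substitute the explicit formula \eqref{e-pol} from Theorem \ref{lpoch} into the definition of $F_w$, interchange the order of summation, and apply Proposition \ref{pro dsgn} to collapse the resulting inner sum. After that, the statement reduces to the classical identity $\sum_{x \leq w}\varepsilon_x P_{x,w} = \delta_{e,w}$ recalled at the end of the proof of Proposition \ref{pro dsgn}.

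Concretely, I would begin by substituting \eqref{e-pol} into $F_w$. The exponents $q^{-\ell(x)/2}$ from the definition of $F_w$ and $q^{(\ell(x)-\ell(w))/2}$ from \eqref{e-pol} combine to a common factor $q^{-\ell(w)/2}$ that can be pulled outside, leaving
\[
F_w(q^{-1/2}) = q^{-\ell(w)/2} \sum_{\substack{x \in W_c(X) \\ x \leq w}} \varepsilon_x \left( P_{x,w} + \sum_{\substack{y \notin W_c(X) \\ x < y < w}} D_{x,y} P_{y,w} \right).
\]
Next, I would exchange the order of summation in the double sum to obtain
\[
\sum_{\substack{y \notin W_c(X) \\ y < w}} P_{y,w} \left( \sum_{\substack{x \in W_c(X) \\ x < y}} \varepsilon_x D_{x,y} \right).
\]
Since $y \notin W_c(X)$, the condition $x < y$ on the inner sum coincides with $x \leq y$ for $x \in W_c(X)$, so Proposition \ref{pro dsgn} identifies the inner sum as $\varepsilon_y$.

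At this point
\[
F_w(q^{-1/2}) = q^{-\ell(w)/2}\left(\sum_{\substack{x \in W_c(X) \\ x \leq w}} \varepsilon_x P_{x,w} + \sum_{\substack{y \notin W_c(X) \\ y < w}} \varepsilon_y P_{y,w}\right),
\]
and because $w \in W_c(X)$, the two sums partition $\{x : x \leq w\}$ (the first covers the fully commutative elements up to and including $w$, the second covers the non--fully--commutative ones, which are automatically strictly below $w$). Hence $F_w = q^{-\ell(w)/2}\sum_{x \leq w}\varepsilon_x P_{x,w}$, and the already--cited identity $\sum_{x \leq w}\varepsilon_x P_{x,w}=0$ for $w \neq e$, together with the trivial case $w = e$ where $\ell(w)=0$ and the sum equals $1$, yields $F_w(q^{-1/2}) = \delta_{e,w}$. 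The main obstacle is simply the index bookkeeping needed to verify that the two sums combine exactly to a single sum over the full interval $[e,w]$; once this is checked, the argument is a direct invocation of the earlier results.
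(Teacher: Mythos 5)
Your proof is correct, but it follows a genuinely different route from the paper's. The paper does not substitute the closed formula \eqref{e-pol}; instead it uses the inversion relation \eqref{lpolapol} together with Proposition \ref{apollo} (the identity $\sum_{x}\varepsilon_x\varepsilon_w a_{x,w}=q^{\ell(w)}$) to show that $F_w(q^{-\frac{1}{2}})=F_w(q^{\frac{1}{2}})$, hence that $F_w$ is constant, and then observes that the constant term vanishes because $L_{x,w}\in q^{-\frac{1}{2}}\Z[q^{-\frac{1}{2}}]$ for $x<w$. Your argument instead pushes everything back to the Hecke algebra via Theorem \ref{lpoch}, interchanges summation, collapses the inner sum with Proposition \ref{pro dsgn}, and lands on the classical identity $\sum_{x\leq w}\varepsilon_x P_{x,w}=\delta_{e,w}$; structurally it is the exact analogue of the paper's own proof of Proposition \ref{apollo}, with $(L,P)$ in place of $(a,R)$. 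Your version is more direct and avoids the symmetry-plus-degree argument, at the cost of invoking the heavier closed formula \eqref{e-pol}; the paper's version stays internal to $TL(X)$ and exhibits $F_w$ as an $\iota$-invariant quantity, which is perhaps more in the spirit of the Kazhdan--Lusztig exercise it imitates. Two small points to tidy up: Theorem \ref{lpoch} is stated only for $x<w$, so you should note that the $x=w$ term is covered by $L_{w,w}=1$ (consistent with reading \eqref{e-pol} with an empty sum); and when you merge the two sums you are implicitly using that $w\in W_c(X)$ forces every non--fully--commutative $y\leq w$ to satisfy $y<w$ --- you do say this, and it is exactly the bookkeeping needed.
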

\begin{proof}
The case $w=e$ is trivial. Suppose $w \not = e$. Combining (\ref{lpolapol}) with Proposition \ref{apollo} we have
\begin{eqnarray}
F_w(q^{-\frac{1}{2}}) &=& \sum_{\substack{u \in W_c(X) \\ u \leq w}}\varepsilon_u q^{-\frac{\ell(u)}{2}} \left( \sum_{\substack{x \in W_c(X) \\ u \leq x \leq w}} q^{\frac{\ell(u)-\ell(x)}{2}} a_{u,x}(q) L_{x,w}(q^{\frac{1}{2}})  \right) \nonumber \\  
                      &=& \sum_{\substack{x \in W_c(X) \\ x \leq w}} \left( \sum_{\substack{u \in W_c(X) \\ u \leq x}} \varepsilon_u q^{-\frac{\ell(x)}{2}} a_{u,x}(q) L_{x,w}(q^{\frac{1}{2}})  \right) \nonumber \\
                      &=& \sum_{\substack{x \in W_c(X) \\ x \leq w}} \varepsilon_x q^{-\frac{\ell(x)}{2}} L_{x,w}(q^{\frac{1}{2}}) \left( \sum_{\substack{u \in W_c(X) \\ u \leq x}} \varepsilon_x \varepsilon_u a_{u,x}(q) \right) \nonumber \\
                      &=& \sum_{\substack{x \in W_c(X) \\ x \leq w}} \varepsilon_x q^{-\frac{\ell(x)}{2}} L_{x,w}(q^{\frac{1}{2}}) q^{\ell(x)} \nonumber \\
                      &=& \sum_{\substack{x \in W_c(X) \\ x \leq w}} \varepsilon_x q^{\frac{\ell(x)}{2}} L_{x,w}(q^{\frac{1}{2}})  \nonumber \\ 
                      &=& F_w(q^{\frac{1}{2}}). \nonumber 
\end{eqnarray}
This implies that $F_w(q^{-\frac{1}{2}})$ is constant. On the other hand, the constant term in $F_w(q^{-\frac{1}{2}})$ is zero since $L_{x,w} \in q^{-\frac{1}{2}}\Z[q^{-\frac{1}{2}}]$ by Theorem \ref{l-pol}, and the statement follows.
\qed
\end{proof}

\newpage

\section*{Acknowledgements}
I would like to thank Prof. Francesco Brenti for introducing me to this topic and for many useful conversations.






\begin{thebibliography}{xx}

\bibitem{bjs-cps}
Billey, Sara C. and Jockusch, William and Stanley, Richard P.,
{\em Some combinatorial properties of {S}chubert polynomials},
J. Algebraic Combin., {\bf 2} (1993), 345--374.
 

\bibitem{bb-ccg}
Bj{\"o}rner, Anders and Brenti, Francesco,
{\em Combinatorics of {C}oxeter groups},
GTM 231 (2005), Springer.



\bibitem{bre-cpkl}
Brenti, Francesco,
{\em Combinatorial properties of the {K}azhdan-{L}usztig {$R$}-polynomials for {$S_n$}},
Adv. Math., {\bf 126} (1997), 21--51.




  




\bibitem{du-icb}
Du, Jie,
{\em Global {IC} bases for quantum linear groups},
J. Pure Appl. Algebra, {\bf 114} (1996), 25--37.



\bibitem{fan-haq}
Fan, C. K.,
{\em A {H}ecke algebra quotient and some combinatorial applications},
J. Algebraic Combin., {\bf 5} (1996), 175--189.


\bibitem{gr-phd}
Graham, J. J.,
{\em Modular representations of {H}ecke algebras and related algebras},
Ph.D. thesis, University of Sydney, 1995.


\bibitem{gre-gjt}
Green, R. M.,
{\em Generalized {J}ones traces and {K}azhdan-{L}usztig bases},
J. Pure Appl. Algebra, {\bf 211} (2007), 744--772.


\bibitem{gl-cbh}
Green, R. M. and Losonczy, J.,
{\em Canonical bases for {H}ecke algebra quotients},
Math. Res. Lett., {\bf 6} (1999), 213--222.

\bibitem{gl-ppk}
Green, R. M. and Losonczy, J.,
{\em A projection property for {K}azhdan-{L}usztig bases},
Internat. Math. Res. Notices, {\bf 1} (2000), 23--34.

\bibitem{gl-fck}
Green, R. M. and Losonczy, J.,
{\em Fully commutative {K}azhdan-{L}usztig cells},
Ann. Inst. Fourier (Grenoble), {\bf 51} (2001), 1025--1045.
	
\bibitem{Hum}
     Humphreys, James E.,
     {\em Reflection groups and {C}oxeter groups},
     Cambridge Studies in Advanced Mathematics, {\bf 29} (1990), Cambridge University Press. 
 
 	
 	
\bibitem{jo-pik}
     Jones, Vaughan F. R.,
     {\em A polynomial invariant for knots via von {N}eumann algebras},
  Bull. Amer. Math. Soc. (N.S.), {\bf 12} (1985), 103--111.
  
\bibitem{jo-har}
     Jones, Vaughan F. R.,
     {\em Hecke algebra representations of braid groups and link
              polynomials},
  Ann. of Math. (2), {\bf 126} (1987), 335--388.


\bibitem{kl-rcg}
     Kazhdan, David and Lusztig, George,
     {\em Representations of {C}oxeter groups and {H}ecke algebras},
Invent. Math., {\bf 53} (1979), 165--184.





  

  
  


\bibitem{pes-cptl}
	Pesiri, A.
	{\em Combinatorial properties of the {Temperley}--{L}ieb algebra of a {C}oxeter group},
J. Algebraic Combin., {\bf 37} (2013), pp 717--736.



\bibitem{shi-fce}
Shi, Jian-Yi,
{\em Fully commutative elements and {K}azhdan-{L}usztig cells in the finite and affine {C}oxeter groups},
Proc. Amer. Math. Soc., {\bf 131} (2003), 3371--3378.  

\bibitem{shi-fceII}
Shi, Jian-Yi,
{\em Fully commutative elements and {K}azhdan-{L}usztig cells in the finite and affine {C}oxeter groups. {II}},
Proc. Amer. Math. Soc., {\bf 133} (2005), 2525--2531.  


\bibitem{ste-fce}
     Stembridge, John R.,
     {\em On the fully commutative elements of {C}oxeter groups},
J. Algebraic Combin., {\bf 5} (1996), 353--385.


\bibitem{ste-sca}
    Stembridge, John R.,
     {\em Some combinatorial aspects of reduced words in finite {C}oxeter groups},
Trans. Amer. Math. Soc., {\bf 349} (1997), 1285--1332.
      

\bibitem{tl-pcp}
     Temperley, H. N. V. and Lieb, E. H.,
     {\em Relations between the ``percolation'' and ``colouring''
              problem and other graph-theoretical problems associated with
              regular planar lattices: some exact results for the
              ``percolation'' problem},
Proc. Roy. Soc. London Ser. A, {\bf 322} (1971), 251--280.

\end{thebibliography}







\end{document}